\newcommand{\rem}[1]{}
\def\thefigure{\thesection.\@arabic\c@figure}
\def\fps@figure{h, t}
\def\thetable{\thesection.\@arabic\c@table}
\def\fps@table{h, t}
\newcommand \al{\alpha}
\newcommand\be{\beta}
\newcommand\ga{\gamma}
\newcommand\ph{\varphi}
\newcommand\ps{\psi}
\newcommand\om{\omega}
\newcommand\Ga{\Gamma}
\newcommand\Om{\Omega}
\newcommand\resp{resp.\ }
\newcommand\ie{i.e.\ }
\newcommand\oo{{\infty}}
\renewcommand\o{\circ}
\newcommand\x{\times}
\newcommand\on{\operatorname}
\newcommand\Emb{\on{Emb}}
\newcommand\Diff{\on{Diff}}
\newcommand\Gr{\on{Gr}}
\newcommand\vol{\on{vol}}
\newcommand\g{\mathfrak g}
\newcommand\pa{\partial}
\newcommand\h{\mathfrak h}
\newcommand\J{\mathbf{J}}
\newcommand\X{\mathfrak X}
\newcommand\RR{\mathbb R}
\newenvironment{proof}[1][Proof]{\noindent\textbf{#1.} }{\ \rule{0.5em}{0.5em}}
\def\XXint#1#2#3{{\setbox0=\hbox{$#1{#2#3}{\int}$ }
\vcenter{\hbox{$#2#3$ }}\kern-.5\wd0}}
\begin{document}

\newtheorem{theorem}{Theorem}[section]
\newtheorem{definition}[theorem]{Definition}
\newtheorem{lemma}[theorem]{Lemma}
\newtheorem{remark}[theorem]{Remark}
\newtheorem{proposition}[theorem]{Proposition}
\newtheorem{corollary}[theorem]{Corollary}
\newtheorem{example}[theorem]{Example}

\def\below#1#2{\mathrel{\mathop{#1}\limits_{#2}}}



\title{A dual pair for free boundary fluids}
\author{Fran\c{c}ois Gay-Balmaz and Cornelia Vizman}

\date{ }
\maketitle
\makeatother
\maketitle

\begin{abstract}{ 
We construct a dual pair associated to the Hamiltonian geometric formulation of perfect fluids with free boundaries. 
This dual pair is defined on the cotangent bundle of the space of volume preserving embeddings
of a manifold with boundary into a boundaryless manifold of the same dimension. The dual pair properties are rigorously verified in the infinite dimensional Fr\'echet manifold setting. It provides an example of a dual pair associated to actions that are not completely mutually orthogonal.}
\end{abstract}

\let\thefootnote\relax\footnotetext{\textit{AMS Classification:} 53D17; 53D20; 37K65; 58D05; 58D10}
\let\thefootnote\relax\footnotetext{\textit{Keywords:} Dual pair, momentum map, free boundary Euler equation.}



\section{Introduction}

The concept of dual pair, formalized by \cite{We83}, is an important notion in Poisson geometry and has several applications in the context of
momentum maps and reduction theory, see e.g. \cite{OrRa04} and references therein. We recall that given a finite dimensional symplectic manifold $(M,\omega)$ and two finite dimensional Poisson manifolds $P_1, P_2$, a pair of Poisson mappings
\begin{equation*}
P_1\stackrel{\J_1}{\longleftarrow}(M,\om)\stackrel{\J_2}{\longrightarrow} P_2
\end{equation*}
is called a \textit{dual pair\/} if $\ker T\J_1$ and $\ker T\J_2$ are symplectic orthogonal complements of one another, where $\ker T\J_i$ denotes the kernel of the tangent map $T\mathbf{J}_i$ of $\mathbf{J}_i$.
Dual pair structures arise naturally in classical mechanics. In many cases, the Poisson maps $\mathbf{J}_i$ are momentum mappings associated to Lie algebra actions on $M$.
For example, in \cite{Ma1987} (see also \cite{CuRo1982}, \cite{GoSt1987} 
and \cite{Iw1985}) it was shown that the concept of dual pair of momentum maps can be useful for the study of bifurcations in Hamiltonian systems with symmetry.

In the context of infinite dimensional manifolds, there are several difficulties related with the definition of dual pairs and rigorous justification of their properties in concrete examples. Such difficulties have been overcome in \cite{GBVi2012} for the case of dual pairs of momentum maps associated to the Euler equations of a perfect fluid and the dual pairs of momentum maps associated to the $n$-dimensional Camassa-Holm equation (also named EPDiff equations, standing for Euler-Poincar\'e equations on the diffeomorphism group). These dual pairs have been defined in \cite{MaWe83} and \cite{HoMa2004}, respectively, and clarify, among other things, the occurrence of singular solutions as Clebsch variables (in the sense of \cite{MaWe83}) together with their gauge group symmetries. 

\medskip

We now briefly recall the construction of these two dual pairs structures at a formal level, that is, without describing the additional restrictions and reformulations obtained in \cite{GBVi2012} to prove the dual pair property. 

For ideal fluids, the dual pair is constructed as follows.  
Consider a symplectic manifold $(M,\omega)$, a volume manifold $(S,\mu)$, and let $C^\infty(S,M)$ be the space of smooth maps from $S$ to $M$. The left action of the group $\operatorname{Diff}(M,\omega)$ of symplectic diffeomorphisms and the right action of the group $\operatorname{Diff}(S,\mu)$ of volume preserving diffeomorphisms are two commuting symplectic actions on $C^\infty(S,M)$. Their momentum maps $\mathbf{J}_L$ and $\mathbf{J}_R$
form the dual pair for the Euler equation:
\[
\X(M, \omega )^*\stackrel{\J_L}{\longleftarrow} C^\infty(S,M)\stackrel{\J_R}{\longrightarrow}\X(S, \mu )^*.
\]
While the right leg represents Clebsch variables for the Euler equations seen as a Hamiltonian system on $\X(S, \mu )^*$, the left leg is a
constant of motion for the induced Hamiltonian system on $C^\infty(S, M)$, which reduces to point vortex solutions of the two dimensional Euler equations when $\operatorname{dim}(S)=0$ and $M$ is two dimensional (see \cite{MaWe83}). As emphasized in \cite{HoTr2009}, this dual pair naturally arises also in kinetic theory, in relation with the Klimontovich solutions of the Poisson-Vlasov system in plasma physics.

To obtain the dual pair of the $n$-dimensional Camassa-Holm equation one considers the cotangent bundle $T^* \operatorname{Emb}(S,M)$ of the manifold $ \operatorname{Emb}(S,M)$ of all embeddings of $S$ into $M$. The diffeomorphism groups $ \operatorname{Diff}(S)$ and $ \operatorname{Diff}(M)$ naturally act on $T^* \operatorname{Emb}(S,M)$ by the cotangent lift of right and left composition, respectively. The associated momentum maps form the dual pair for the $n$-Camassa-Holm equation:
\[
\X(M)^*\stackrel{\J_L}{\longleftarrow}T^* \operatorname{Emb}(S,M)\stackrel{\J_R}{\longrightarrow}\X(S)^*,
\]
\cite{HoMa2004}.
While the left leg provides singular solutions of the EPDiff equation, the right leg
is a constant of motion associated to the collective motion on $T^*\operatorname{Emb}(S, M)$.

\medskip

In this paper we consider and study a dual pair associated to an other important example of infinite dimensional Hamiltonian system arising in fluid mechanics, namely, the case of a perfect fluid with free boundaries.

As we will recall in details later, the phase space of a free boundary fluid is given by the cotangent bundle $T^* \operatorname{Emb}_{\vol}(S,M)$ of the manifold $ \operatorname{Emb}_{\vol}(S,M)$ of all volume preserving embeddings of a compact volume manifold $(S,\mu_S)$ with boundary $\pa S$ into a volume manifold $(M,\mu)$ without boundary, both having the same dimension. The Hamiltonian structure of free boundary perfect fluids has been shown in \cite{LeMaMoRa1986} to arise by Poisson reduction of the canonical symplectic form on $T^* \operatorname{Emb}_{\vol}(S,M)$  by the group $ \operatorname{Diff}_{\vol}(S)$ of all volume preserving diffeomorphisms $ \operatorname{Diff}_{\vol}(S)$ of $S$. The reduced space is therefore given by the quotient $T^* \operatorname{Emb}_{\vol}(S,M)/ \operatorname{Diff}_{\vol}(S)$ endowed with the reduced Poisson bracket. On the other hand, Noether theorem associated to the $\operatorname{Diff}_{\vol}(S)$-action gives rise to conservation of the momentum map $ \mathbf{J}_R: T^* \operatorname{Emb}_{\vol}(S,M) \rightarrow \mathfrak{X} _{\vol,\|}(S)^\ast $. This momentum map, together with the quotient map $ \pi_R $, form the dual pair associated to free boundary fluid that will be the main object of study in this paper:
\begin{equation}\label{DP_free_boundary} 
T^* \operatorname{Emb}_{\vol}(S,M)/ \operatorname{Diff}_{\vol}(S)\stackrel{\pi_R }{\longleftarrow}T^* \operatorname{Emb}_{\vol}(S,M)\stackrel{\J_R}{\longrightarrow}\X_{\vol,\|}(S)^*.
\end{equation} 

We will obtain the dual pair property of \eqref{DP_free_boundary} by first considering the pair of momentum maps
\[
\X_{\vol}(M)^*\stackrel{\J_L}{\longleftarrow}T^*\Emb_{\vol}(S,M)^\x\stackrel{\J_R}{\longrightarrow}\X_{\vol,\|}(S)^*
\]
for which the commuting actions of $\Diff_{\vol}(S)$ and $\Diff_{\vol}(M)$ are not mutually completely orthogonal (in the sense of \cite{LM}),
namely only the $\Diff_{\vol}(S)$-orbits are the symplectic orthogonals of the $\Diff_{\vol}(M)$-orbits, not vice-versa. {Of course, such a situation can only happen in the infinite dimensional case}.

\paragraph{Acknowledgements.}
This work was partially supported by a grant of the Romanian National Authority for Scientific  Research, CNCS UEFISCDI, project number PN-II-ID-PCE-2011-3-0921.

\section{Free boundary fluids}

{In this section we quickly recall the geometric formulation of perfect fluids with fixed or free boundary, and the associated process of reduction by symmetry.}

\paragraph{Geometry of perfect fluids.} The equations of motion of an ideal incompressible fluid on an
oriented compact Riemannian manifold $(M,g)$ with smooth boundary $\partial M$ are given by the Euler equations
\begin{equation}\label{Euler}
\partial _t v+\nabla_vv =-\operatorname{grad} p,
\end{equation}
where the Eulerian velocity $v$ is a divergence free vector
field parallel to the boundary, $p$ is the \hbox{pressure}, and $\nabla$ is the
Levi--Civita covariant derivative associated to $g$.
Arnold \cite{Arnold1966} has shown that equations \eqref{Euler}
are formally the spatial representation of geodesics
on the volume-preserving diffeomorphism group
$\operatorname{Diff}_{\rm vol}(M)$ of $M$ with respect to the $L^2$
Riemannian metric. From this, one obtains that the Euler equations in Lagrangian representation are given by canonical Hamilton's equations on the cotangent bundle $T^* \operatorname{Diff}_{\rm vol}(M)$. The noncanonical Hamiltonian structure of the  Euler equations in spatial representation \eqref{Euler} are obtained by Poisson reduction of the canonical Poisson brackets on $T^* \operatorname{Diff}_{\rm vol}(M)$, as explained in \cite{MaWe83}.
This is a particular instance of the process of Lie--Poisson reduction valid on any Lie group $G$, \cite{MaRa99}. More precisely, the Lie--Poisson brackets
\[
\{f,g\}(\mu) = \left\langle \mu ,\left[\frac{\delta f}{\delta \mu }, \frac{\delta g}{\delta \mu} \right] \right\rangle , \quad \mu \in \mathfrak{g}  ^\ast ,
\]
on the dual space $ \mathfrak{g}  ^\ast $ of the Lie algebra $\mathfrak{g}  $ of $G$, are obtained by Poisson reduction of the canonical Poisson bracket on the cotangent bundle $T^*G$, via the quotient map $T^*G \rightarrow \mathfrak{g}  ^\ast $, $  \alpha _g \mapsto  \alpha _g g ^{-1} $. For the Euler equations, one chooses $ G= \operatorname{Diff}_{\rm vol}(M)$ so that $ \mathfrak{g}  = \mathfrak{X}_{\rm vol,\|}(M)$ is the Lie algebra of divergence free vector fields on $M$ parallel to the boundary. 

\paragraph{Geometry of perfect free boundary fluids.} The above reduced Hamiltonian formulation has been extended to the case of perfect fluids with free boundaries in \cite{LeMaMoRa1986}. The configuration space is given by the manifold $ \operatorname{Emb}_{\vol}(S,M)$ of all volume preserving embeddings of a compact volume manifold $(S, \mu_S  )$ with smooth boundary into a boundaryless Riemannian manifold $(M,g)$ of same dimension. Recall that an embedding $ \varphi :S \rightarrow M$ is volume preserving if $ \varphi ^\ast \mu_ M = \mu _S$, where $ \mu_M$ is the Riemannian volume form on $M$.

The tangent space $T_ \varphi \operatorname{Emb}_{\vol}(S,M)$ consists of vector fields $v_ \varphi : S \rightarrow TM$ along $ \varphi $ such that $v_ \varphi \circ \varphi ^{-1} \in \mathfrak{X}_{\vol}( \varphi (S))$, the space of divergence free vector fields (relative to $ \mu _M$) on $ \varphi (S)$, not necessarily parallel to the boundary.
The quotient space $\operatorname{Emb}_{\vol}(S, M)/ \operatorname{Diff}_{\vol}(S)$ is the nonlinear Grassmannian $ \operatorname{Gr}_0^S(M)$ of all type $S$ submanifolds of $M$ of same volume with $S$, that is,
\begin{equation}\label{defi}
\operatorname{Gr}_0^S(M)=\{D\subset M: \text{$D$ submanif. diffeom. to $S$, $\operatorname{Vol}_{ \mu _M }(D)= \operatorname{Vol}_{ \mu _S }(S)$}\}.
\end{equation}
The quotient space $T \operatorname{Emb}_{\vol}(S, M)/ \operatorname{Diff}_{\vol}(S)$ is identified with the vector bundle over $ \operatorname{Gr}^S_0(M)$  whose fiber at $D$ is given by the space $ \mathfrak{X}_{\rm vol}(D)$ defined above. Using this identification, the projection onto the quotient reads
\begin{equation}\label{quotient_map} 
v_ \varphi \in T \operatorname{Emb}_{\vol}(S, M) \mapsto v  =v_ \varphi \circ \varphi ^{-1} \in T \operatorname{Emb}_{\vol}(S, M)/ \operatorname{Diff}_{\rm vol}(S).
\end{equation}
{We refer to \cite{GBVi2014} for a detailed study of the Fr\'echet manifold structures of nonlinear Grassmannians of type $S$, when $S$ has a nonempty boundary}.

\medskip

The dynamics of the fluid in Lagrangian representation is described by a curve $ \varphi (t) \in \operatorname{Emb}_{\vol}(S,M)$ which indicates the current position $x= \varphi (t)(s)$ at time $t$ of the material particle labelled by $s \in S$ . The free  boundary motion $ \Sigma (t)$ is given by the image of $ \partial S$ by $ \varphi (t)$, that is $ \Sigma (t)= \varphi (t)( \partial S)= \partial ( \varphi (t)(S))$. 

The material Lagrangian $ L: T \operatorname{Emb}_{\vol}(S,M) \rightarrow \mathbb{R}$  of the free boundary fluid reads
\[
L(\varphi , v _\varphi  )= \frac{1}{2} \int_ S g( \varphi (s))(v _\varphi  (s), v _\varphi  (s)) \mu _S(s)  - \tau \int_{ \partial S}\gamma _{ \varphi ^\ast g}(s),
\]
where the constant $ \tau >0$ is the surface tension and $\gamma  _{ \varphi ^\ast g}$ is the volume form on $ \partial S$ induced by the restriction to $ \partial S$ of the metric $ \varphi ^\ast g$ on $S$.
This Lagrangian is $ \operatorname{Diff}_{\vol}(S)$-invariant and  therefore induces a Lagrangian on the quotient space $ T \operatorname{Emb}_{\vol}(S, M)/ \operatorname{Diff}_{\vol}(S)$, the projection map being given by \eqref{quotient_map}. 
The reduced Lagrangian thus reads
\[
\ell(v)= \frac{1}{2} \int_ {D}g(x)(v(x), v (x)) \mu _M (x)- \tau \int_ \Sigma \gamma _ \Sigma (x),
\]
where $v\in\X_{\vol}(D)$, $ \Sigma = \partial D$ and $ \gamma _ \Sigma $ is the volume form on $ \Sigma $ induced by $g$.
The equations of motions are given by
\[
\left\{
\begin{array}{l}
\displaystyle\vspace{0.2cm}\partial _t v + \nabla _ vv  = - \operatorname{grad}p\;\; \;\text{on $D$}\\
\displaystyle\vspace{0.2cm}\partial _t \Sigma = g( n, v ) \\
\displaystyle\operatorname{div}_ {\mu_M} v=0, \;\; p|_{ \Sigma }= \tau \kappa_\Sigma  ,
\end{array}
\right.
\]
where $ \kappa _\Sigma $ is the mean curvature of the hypersurface $ \Sigma $ relative to the metric $g$, and $n$ is the outward-pointing unit normal vector field to $ \Sigma $ relative to $g$.
{The second equation above is written in the tangent space to $ \operatorname{Gr}^S_0 (M)$ at $D$, identified with the space of smooth functions $f $ on $ \Sigma =\partial D$ with $\int_ \Sigma f \gamma _\Sigma =0$.}

We refer to \cite{LeMaMoRa1986} for further informations concerning the geometry of free boundary perfect fluids, the description of the Poisson brackets, and the reduced Hamiltonian formulation. The Lagrangian side together with the associated variational principle for free boundary continuum mechanics has been further studied in \cite{GBMR2012}.


\section{Dual pairs in infinite dimensions}\label{deux}

{In this section we review the definition of a dual pair and highlight some difficulties arising in the infinite dimensional case. We then provide general conditions which guarantee the dual pair property in the infinite dimensional case. These results will be applied to the case of free boundary fluids in Section \ref{5}.}

\medskip

Let $(M,\omega)$ be a symplectic manifold and $P_1, P_2$ be two finite dimensional Poisson manifolds. A pair of Poisson mappings
\begin{equation}\label{pois}
P_1\stackrel{\J_1}{\longleftarrow}(M,\om)\stackrel{\J_2}{\longrightarrow} P_2
\end{equation}
is called a \textit{dual pair} (\cite{We83})
if the kernels $\ker T\J_1$ and $\ker T\J_2$ are symplectic orthogonal complements of one another. That is, for each $m\in M$,
\begin{equation}\label{strong_dual_pair}
(\ker T_m\J_1)^\om=\ker T_m\J_2,\quad (\ker T_m\J_2)^\om=\ker T_m\J_1.
\end{equation}
In finite dimensions these two conditions are equivalent, so one is enough to get a dual pair,
as in the example below.

\begin{example}[Dual pair associated to a free and proper Hamiltonian action]
{\rm  Let $G$ be a Lie group acting symplectically (on the left) on a symplectic manifold $(M,\omega)$ and admitting a momentum map $\mathbf{J}:M\rightarrow\mathfrak{g}^*$.
This means that $\mathbf{d}\langle\mathbf{J},\xi\rangle=\mathbf{i}_{\xi_M}\omega$ for all 
$\xi\in \mathfrak{g}$, where $\xi_M$ denotes the infinitesimal generator.
The kernel  of the tangent map $T\mathbf{J}$ is characterized by the equality
\begin{equation}\label{ker_range}
\operatorname{ker}T_m\mathbf{J}=\mathfrak{g}_M(m)^\omega,
\end{equation}
where $\mathfrak{g}_M(m):=\{\xi_M(m)\mid \xi\in\mathfrak{g}\}$. 
When the momentum map is equivariant, it is a Poisson map with respect to the symplectic form on $M$ 
and the $(+)$ Lie--Poisson structure
on $\mathfrak{g}^*$. If we suppose in addition
that $G$ acts freely and properly on $M$, then the quotient space
$M/G$ is a smooth manifold such that the projection
$\pi:M\rightarrow M/G$ is a smooth surjective submersion. This map
is Poisson with respect to the symplectic form on $M$ and the
induced quotient Poisson structure on $M/G$. Thus, using the
equality
$\operatorname{ker}T\mathbf{J}=\left(\mathfrak{g}_M\right)^\omega=(\operatorname{ker}T\pi)^\omega$,
we obtain the dual pair  (\cite{We83})
\begin{equation}\label{reduction}
M/G\stackrel{\pi}{\longleftarrow}(M,\om)\stackrel{\mathbf{J}}{\longrightarrow} \mathfrak{g}^*.
\end{equation}

Let us see what happens in an infinite dimensional setting, 
with  $G$ a connected Fr\'echet Lie group acting freely on the
symplectic Fr\'echet manifold $(M,\om)$ and admitting an equivariant
momentum map $\J:M\to\g^*$. By $\mathfrak{g}^*$ we denote a topological vector space
in non-degenerate duality with the Fr\'echet Lie algebra
$\mathfrak{g}$. When $ \mathfrak{g}  $ is a Lie algebra 
of sections of a vector bundle, standard examples for $\mathfrak{g}^*$ are the full
distributional dual or the regular dual.
Note that the existence of
a momentum map $\mathbf{J}:M\to\g^*$ may depend on the chosen dual
$\mathfrak{g}^*$. {Note also that equality \eqref{ker_range} still holds in the infinite dimensional case}.

Assuming that the quotient space $M/G$ can be endowed with a smooth manifold structure such that the projection $\pi$ is a smooth map, we have $\mathfrak{g}_M=\operatorname{ker}(T\pi)$,
thus we get the equality
$(\ker T\pi)^\om= \ker T\J$ and the inclusion $\ker T\pi\subset(\ker T\J)^\om$.
Therefore, contrary to the finite dimensional case, in infinite dimensions one cannot conclude that
\eqref{reduction} is a dual pair.$\qquad\blacklozenge$ 
}
\end{example}

A weaker notion of dual pair was introduced in \cite{GBVi2012}, where dual pairs associated to
the ideal fluid and EPDiff equations were studied.
A pair of Poisson mappings (\ref{pois}) is called a {\it weak dual pair},
if $\ker T\J_1$ and $\ker T\J_2$
satisfy the inclusions
\begin{equation}\label{dual_pair}
(\ker T\J_1)^\om\subset\ker T\J_2,\quad (\ker T\J_2)^\om\subset\ker T\J_1.
\end{equation}
In finite dimensions these two inclusions are equivalent. 
An example of weak dual pair is provided by the cotangent momentum maps for two commuting Hamiltonian actions, 
as we will see below.

\begin{example}[Invariant momentum maps]
{\rm
We consider two symplectic actions of the (possibly infinite dimensional Fr\'echet) Lie algebras $\mathfrak{g}$ and $\mathfrak{h}$
on the (possibly infinite dimensional Fr\'echet) symplectic manifold $(M,\omega)$, $\g$ acting on the right and $\h$ on the left.
We assume these actions admit infinitesimally equivariant momentum maps $\mathbf{J}_R$ and $\mathbf{J}_L$,
hence $\J_L:M\to(\h^*,\{\ ,\ \}_+)$ \resp $\J_R:M\to(\g^*,\{\ ,\ \}_-)$ are formally Poisson maps
for the plus \resp minus Lie--Poisson brackets. 

Suppose that $\mathbf{J}_L$ is infinitesimally invariant under the action of $\mathfrak{g}$, i.e., $ \mathfrak{g}  _M \subset \operatorname{ker}T \mathbf{J} _L$ or, equivalently, $ \mathfrak{g}  _M \subset (\mathfrak{h}  _M) ^\omega $ by \eqref{ker_range}. This is equivalent to the property $\omega(\xi_M,\eta_M)=0$, for all $\xi\in\mathfrak{g}$ and $\eta\in\mathfrak{h}$.
This turns out to be also equivalent to the infinitesimal invariance of $ \mathbf{J} _R $ under the action of $ \mathfrak{h}  $.

Passing to the symplectic orthogonal spaces, we get that
$\operatorname{ker}(T\mathbf{J}_L)^\om\subset\left(\mathfrak{g}_M\right)^\omega
=\operatorname{ker}(T\mathbf{J}_R)$ and $\operatorname{ker}(T\mathbf{J}_R)^\om\subset\left(\mathfrak{h}_M\right)^\omega=\operatorname{ker}(T\mathbf{J}_L)$,
hence the pair of momentum maps 
\begin{equation}\label{dp1}
\h^*\stackrel{\J_L}{\longleftarrow}(M,\om)\stackrel{\J_R}{\longrightarrow}\g^*
\end{equation}
is a weak dual pair. The situation can be summarized by the following diagram
\begin{displaymath}
\begin{xy}
\xymatrix{
\mathbf{J}  _R\text{ is $\mathfrak{h}$-inv.} \ar@{<=>}[r]  &\mathfrak{h}  _M \subset ( \mathfrak{g}  _M ) ^\omega  \ar@{<=>}[rr]\ar@{=>}[dd] \ar@{=>}[dr]&\hspace{-0.5cm} &\hspace{-0cm}\mathfrak{g}  _M \subset (\mathfrak{h}  _M) ^\omega  \ar@{=>}[ld]\ar@{=>}[dd]&\hspace{-0.6cm}\phantom{hoo}\mathbf{J}  _L\text{ is $\mathfrak{g}$-inv.} \ar@{<=>}[l]\\
& & \eqref{dual_pair}\ar@{=>}[ld]\ar@{=>}[rd] & &\\
&\hspace{-1cm}(\operatorname{ker}T \mathbf{J} _R) ^\omega \subset \operatorname{ker}T \mathbf{J} _L   & & \hspace{-0.5cm}(\operatorname{ker}T \mathbf{J} _L) ^\omega \subset \operatorname{ker}T \mathbf{J} _R.&
}\end{xy}
\end{displaymath}
In the finite dimensional case, all the arrows become equivalences.
$\qquad\blacklozenge$ }
\end{example}

\begin{lemma}\label{one_is_enough} Let \eqref{pois} be a weak dual pair. If one of the inclusions in \eqref{dual_pair} is an equality then both inclusions are equalities, i.e., \eqref{strong_dual_pair} holds.
\end{lemma}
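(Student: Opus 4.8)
The plan is to reduce everything to the elementary, purely formal fact that for any linear subspace $V$ of a symplectic vector space one always has $V\subseteq (V^\om)^\om$. Indeed, if $v\in V$ and $w\in V^\om$, then $\om(v,w)=-\om(w,v)=0$ because $w$ annihilates all of $V$, so $v$ annihilates all of $V^\om$, i.e. $v\in(V^\om)^\om$; this uses only the antisymmetry of $\om$ and requires no finite-dimensionality, nondegeneracy, or closedness. This one-sided double-orthogonal inclusion is exactly the ingredient that is unavailable when one tries to upgrade a single inclusion of \eqref{dual_pair} to \eqref{strong_dual_pair} in general, but it becomes decisive once one of the inclusions is already an equality.

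By the symmetry of the statement under interchanging $\J_1$ and $\J_2$, it suffices to treat, at a fixed point $m\in M$, the case $(\ker T_m\J_1)^\om=\ker T_m\J_2$. Applying $(\cdot)^\om$ to both sides gives $\big((\ker T_m\J_1)^\om\big)^\om=(\ker T_m\J_2)^\om$, and combining this with $\ker T_m\J_1\subseteq\big((\ker T_m\J_1)^\om\big)^\om$ from the previous paragraph yields $\ker T_m\J_1\subseteq(\ker T_m\J_2)^\om$. On the other hand, since \eqref{pois} is a weak dual pair, the second inclusion in \eqref{dual_pair} gives $(\ker T_m\J_2)^\om\subseteq\ker T_m\J_1$. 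The two inclusions together give $(\ker T_m\J_2)^\om=\ker T_m\J_1$, which together with the hypothesis $(\ker T_m\J_1)^\om=\ker T_m\J_2$ is precisely \eqref{strong_dual_pair} at $m$. Since $m\in M$ was arbitrary, both inclusions in \eqref{dual_pair} are equalities.

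There is no real obstacle in this argument; the only point deserving a moment's attention is that $V\subseteq(V^\om)^\om$ must be checked for the subspaces $\ker T_m\J_i$, which may be infinite-dimensional and need not be $\om$-closed, but as noted this inclusion follows directly from the definition of the symplectic orthogonal. The entire content of the lemma is the observation that this weak, one-sided form of reflexivity of $(\cdot)^\om$ suffices to propagate a single equality to both.
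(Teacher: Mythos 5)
Your proof is correct and follows essentially the same route as the paper: both arguments rest on the elementary inclusion $V\subseteq (V^{\om})^{\om}$, apply $(\cdot)^{\om}$ to the assumed equality, and combine with the remaining weak dual pair inclusion to force the second equality. No issues.
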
 
\begin{proof} Suppose that $(\ker T\J_1)^\om=\ker T\J_2$ and $(\ker T\J_2)^\om\subset\ker T\J_1$. We thus have $(\ker T\J_1)^{\om\om}=(\ker T\J_2) ^\omega \subset \ker T\J_1$. But since we aways have the inclusion $\ker T\J_1 \subset (\ker T\J_1)^{\om\om}$ (in the finite dimensional case, this is an equality), it follows that $(\ker T\J_2)^\om=\ker T\J_1$.
\end{proof} 

\medskip

From this Lemma, we obtain the following improvement of a result of \cite{GBVi2012}. It will be needed later to establish the dual pair property arising in the context of free boundary fluids.

\color{black}

\medskip

\begin{proposition}\label{modulog}Consider two symplectic actions of (possibly infinite dimensional) Lie algebras $\mathfrak{g}$ and $\mathfrak{h}$ on a (possibly infinite dimensional) symplectic manifold $(M,\omega)$. Assume that these actions admit infinitesimally equivariant momentum maps $\mathbf{J}_R$ and $\mathbf{J}_L$. If
\begin{equation}\label{DP_Prop} 
\h^*\stackrel{\J_L}{\longleftarrow}(M,\om)\stackrel{\J_R}{\longrightarrow}\g^*
\end{equation} 
is a weak dual pair with $\g_M=(\h_M)^\om$, then \eqref{DP_Prop} is a dual pair.

Moreover, assuming that the right momentum map is associated to 
a symplectic Lie group action of $G$ on $M$ and
 that the quotient space $M/G$ can be endowed with a smooth manifold structure such that  the projection $\pi:M\to M/G$ is a smooth map, then
\[
M/G\stackrel{\pi}{\longleftarrow}(M,\om)\stackrel{\mathbf{\J}_R}{\longrightarrow} \mathfrak{g}^*
\]
is a dual pair too.
\end{proposition}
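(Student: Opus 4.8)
The plan is to derive both assertions from Lemma~\ref{one_is_enough} together with the two characterizations of kernels of momentum maps given in \eqref{ker_range} and in the Invariant momentum maps example. First I would treat the claim that \eqref{DP_Prop} is a dual pair. By hypothesis \eqref{DP_Prop} is already a weak dual pair, so by Lemma~\ref{one_is_enough} it suffices to upgrade one of the two inclusions in \eqref{dual_pair} to an equality. I would work with the inclusion $(\ker T\J_L)^\om\subset\ker T\J_R$. Using \eqref{ker_range} for the $\mathfrak{h}$-action we have $\ker T\J_L=(\h_M)^\om$, hence $(\ker T\J_L)^\om=(\h_M)^{\om\om}$. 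On the other hand, again by \eqref{ker_range}, now for the $\mathfrak{g}$-action, $\ker T\J_R=(\g_M)^\om$, and the standing assumption $\g_M=(\h_M)^\om$ gives $\ker T\J_R=((\h_M)^\om)^\om=(\h_M)^{\om\om}$. Therefore $(\ker T\J_L)^\om=(\h_M)^{\om\om}=\ker T\J_R$, which is the desired equality. Lemma~\ref{one_is_enough} then promotes the weak dual pair to a dual pair, establishing \eqref{strong_dual_pair}.

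For the second assertion I would now assume in addition that $\J_R$ comes from a symplectic $G$-action with smooth quotient $\pi:M\to M/G$, and verify that $M/G\stackrel{\pi}{\longleftarrow}(M,\om)\stackrel{\J_R}{\longrightarrow}\g^*$ is a dual pair. The point already noted in the Example (and valid in infinite dimensions) is that $\mathfrak{g}_M=\ker T\pi$, so $\ker T\pi=\g_M$ and $(\ker T\pi)^\om=(\g_M)^\om=\ker T\J_R$ by \eqref{ker_range}. This is one of the two equalities in \eqref{strong_dual_pair} for the pair $(\pi,\J_R)$. It remains to show $(\ker T\J_R)^\om=\ker T\pi$, i.e.\ $(\ker T\J_R)^\om=\g_M$. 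But we have just shown, in the first part, that $\ker T\J_R=(\h_M)^{\om\om}$, and also that $\ker T\J_L=(\h_M)^\om$, so $(\ker T\J_R)^\om=(\h_M)^{\om\om\om}$. Now one invokes the elementary fact that $V^{\om\om\om}=V^\om$ for any subspace $V$ (since $V\subset V^{\om\om}$ implies $V^{\om\om\om}\subset V^\om$, while $W\subset W^{\om\om}$ with $W=V^\om$ gives the reverse inclusion); applied to $V=\h_M$ this yields $(\ker T\J_R)^\om=(\h_M)^\om=\ker T\J_L$. Since $\ker T\J_L\supset\g_M$ by the weak dual pair property and, using $\g_M=(\h_M)^\om=\ker T\J_L$ from the hypothesis, in fact $\ker T\J_L=\g_M=\ker T\pi$, we conclude $(\ker T\J_R)^\om=\ker T\pi$. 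Both equalities of \eqref{strong_dual_pair} hold, so $(\pi,\J_R)$ is a dual pair.

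The main obstacle, and the only place where genuine care is required, is the bookkeeping with the double and triple symplectic orthogonal operations in the infinite dimensional setting, where $V^{\om\om}\neq V$ in general: I must use only the inclusion $V\subset V^{\om\om}$ and its formal consequence $V^{\om\om\om}=V^\om$, never the finite dimensional equality. A secondary point to state cleanly is that $\pi$ and $\J_R$ are Poisson maps — for $\pi$ this is the quotient Poisson structure on $M/G$, and for $\J_R$ it is the infinitesimal equivariance hypothesis giving the minus Lie--Poisson bracket on $\g^*$ — so that the pair $(\pi,\J_R)$ is indeed a pair of Poisson mappings in the sense of \eqref{pois} before the kernel conditions \eqref{strong_dual_pair} are checked. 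Everything else is a direct substitution of \eqref{ker_range} and the standing hypothesis $\g_M=(\h_M)^\om$ into the definitions.
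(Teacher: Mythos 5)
Your proof is correct and follows essentially the same route as the paper: use \eqref{ker_range} together with the hypothesis $\g_M=(\h_M)^\om$ to get $(\ker T\J_L)^\om=\ker T\J_R$, upgrade via Lemma \ref{one_is_enough}, and for the second leg combine $\ker T\pi=\g_M$ with these equalities. Your triple-orthogonal identity $V^{\om\om\om}=V^\om$ merely re-derives inline the equality $(\ker T\J_R)^\om=\ker T\J_L$ that the paper simply quotes from the first part, so the arguments coincide in substance.
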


\begin{proof}
From $\g_M=(\h_M)^\om$ we get $\ker T\J_R=(\ker T\J_L)^\om$ by taking the symplectic orthogonals. From Lemma \ref{one_is_enough} we obtain 
also $\ker T\J_L=(\ker T\J_R)^\om$, so the first pair of Poisson maps is a dual pair.

For the second part, $\g_M=(\h_M)^\om$ implies that
$(\ker T\J_R)^\om= \ker T\J_L= (\mathfrak{h}  _M ) ^\omega 
= \mathfrak{g}  _M = \ker T\pi$. This ensures, together with 
$(\ker T\pi)^\om=(\g_M)^\om=\ker T\J_R$, that the second pair of Poisson maps is a dual pair.
\end{proof}

\medskip

\begin{remark}\label{mut_compl_orth} {\rm It should be noted that the equality $\g_M=(\h_M)^\om$ can be written $\mathfrak{g}_M=\operatorname{ker}T\mathbf{J}_L$ and means that $\mathfrak{g}$ acts transitively on the level set of $\mathbf{J}_L$. Similarly, the equality $\h_M=(\g_M)^\om$ means that 
$\mathfrak{h}$ acts transitively on the level set of $\mathbf{J}_R$.

{When  $\g_M=(\h_M)^\om$ and $\h_M=(\g_M)^\om$, the $ \mathfrak{g}$ and $\mathfrak{h}$ actions are said to be {\it mutually completely orthogonal} (\cite{LM}). Of course, in the finite dimensional case the two equalities are equivalent.}
}
\end{remark}

\begin{remark}\label{oneisenough} {\rm In the infinite dimensional case, under the weak dual pair assumption \eqref{dual_pair}, we can write the following diagram that shows the various implications
\begin{displaymath}
\begin{xy}
\xymatrix{
\mathfrak{g}  _M =( \mathfrak{h}  _M )^\omega\ar@<1ex>[d]|{\mathsf{X} } \ar@{=>}[r]
&(\ker T\J_L)^\om=\ker T\J_R
\ar@{<=>}[d]^{\text{Lemma \ref{one_is_enough}}}&\\
\mathfrak{h}  _M = (\mathfrak{g}  _M )^\omega \ar@<1ex>[u]|{\mathsf{X} }\ar@{=>}[r]&(\ker T\J_R)^\om=\ker T\J_L.&
}
\end{xy}
\end{displaymath}
In the finite dimensional case, all the arrows in the diagram become equivalences.
In infinite dimensions, although the equality $\mathfrak{g}  _M =( \mathfrak{h}  _M )^\omega$ implies the dual pair property, it does not imply the equality $\mathfrak{h}_M = (\mathfrak{g}  _M )^\omega$ in general. 
In Section \ref{5} we will encounter a dual pair of momentum maps which satisfies $\mathfrak{g}  _M =( \mathfrak{h}  _M )^\omega$ and does not satisfy $\mathfrak{h}_M = (\mathfrak{g}  _M )^\omega$. 
{This means that this dual pair is not associated to mutually completely orthogonal actions. Of course, such situation can only happen in the infinite dimensional case.}
}
\end{remark}

We now suppose that the Lie algebra actions are associated to symplectic Lie group actions of the Lie groups $G$ and $H$. We also suppose that $\mathbf{J}_L$ is $G$-invariant. Then we have the infinitesimal invariances $\mathfrak{g}_M\subset \operatorname{ker}T\mathbf{J}_L=(\h_M)^\om$ and 
$\mathfrak{h}_M\subset \operatorname{ker}T\mathbf{J}_R=(\g_M)^\om$. 
If $H$ is connected, then $\mathbf{J}_R$ is also $H$-invariant.
The following two corollaries of Proposition \ref{modulog} are stronger versions of  Corollary 2.6 and Corollary 2.8 
of \cite{GBVi2012}. {These corollaries are relevant if $G$, $H$, and $M$ are infinite dimensional}.

\begin{corollary}\label{cipolla}
Let $\J_R$ and $\J_L$ be equivariant momentum maps arising from the symplectic actions of two Lie groups $G$ and $H$ on a symplectic manifold $(M,\om)$. Assume that $\J_L$ is $G$-invariant $($or $\J_R$ is $H$-invariant$)$,
then the pair of momentum maps \eqref{dp1}
is a  weak dual pair. 
Moreover, if the $G$ action is transitive on level sets of $\J_L$ $($or the $H$ action is transitive on level sets of $\J_R$$)$, then \eqref{dp1} is a dual pair.
If the groups $G$ and $H$ are connected, then their actions commute.
\end{corollary}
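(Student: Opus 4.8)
The plan is to deduce everything from Proposition \ref{modulog} together with the two general facts recalled in the excerpt: the kernel/range identity \eqref{ker_range}, $\ker T_m\J = \g_M(m)^\om$, valid for any (possibly infinite dimensional) symplectic action admitting a momentum map, and the equivalence between infinitesimal invariance of one momentum map and $\om$-orthogonality of the two orbit directions (the ``Invariant momentum maps'' example).

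First I would establish the weak dual pair claim. Assuming $\J_L$ is $G$-invariant, differentiating the invariance relation gives $\g_M\subset\ker T\J_L$; by \eqref{ker_range} applied to the $H$-action this reads $\g_M\subset(\h_M)^\om$, which is the symmetric condition $\om(\xi_M,\eta_M)=0$ for all $\xi\in\g$, $\eta\in\h$. Hence, symmetrically, $\h_M\subset(\g_M)^\om=\ker T\J_R$, i.e.\ $\J_R$ is infinitesimally $H$-invariant; if $H$ is connected this integrates to genuine $H$-invariance of $\J_R$, which handles the parenthetical alternative hypothesis. Taking symplectic orthogonals in the two inclusions and using \eqref{ker_range} once more yields $(\ker T\J_L)^\om=(\g_M)^{\om\om}\supset\g_M\supset\h_M$... more carefully: from $\g_M\subset(\h_M)^\om$ we get $(\ker T\J_L)^\om=\g_M^{\om\om}$; but the cleaner route, already spelled out in the ``Invariant momentum maps'' example, is $(\ker T\J_L)^\om\subset(\g_M)^\om=\ker T\J_R$ and $(\ker T\J_R)^\om\subset(\h_M)^\om=\ker T\J_L$, which is exactly \eqref{dual_pair}. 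So \eqref{dp1} is a weak dual pair.

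Next, the strengthening to a genuine dual pair: the additional hypothesis ``the $G$ action is transitive on level sets of $\J_L$'' means precisely that $\g_M=\ker T\J_L$, i.e.\ $\g_M=(\h_M)^\om$ by \eqref{ker_range} — this is the hypothesis of Proposition \ref{modulog}. (The alternative ``$H$ transitive on level sets of $\J_R$'' gives $\h_M=(\g_M)^\om$; since $\h_M\subset(\g_M)^\om$ always holds here, taking orthogonals and invoking Lemma \ref{one_is_enough} in the symmetric form recovers $\g_M=(\h_M)^\om$, so again the Proposition applies.) Proposition \ref{modulog} then gives at once that \eqref{dp1}, i.e.\ \eqref{DP_Prop}, is a dual pair. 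Finally, for the commutativity statement: the infinitesimal condition $\om(\xi_M,\eta_M)=0$ together with infinitesimal equivariance of the momentum maps implies $[\xi_M,\eta_M]=0$ (the bracket of the infinitesimal generators vanishes, e.g.\ because $\{\langle\J_L,\eta\rangle,\langle\J_R,\xi\rangle\}=\om(\xi_M,\eta_M)=0$ forces the flows to commute), and since $G$ and $H$ are connected the commuting one-parameter-type flows integrate to commuting group actions.

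I do not expect any serious obstacle here, since the statement is essentially a bookkeeping corollary of Proposition \ref{modulog} and Lemma \ref{one_is_enough}. The one point requiring a little care is the passage from infinitesimal to global statements — $G$-invariance of $\J_L$ versus $H$-invariance of $\J_R$, and commutativity of the actions — where connectedness of the groups is used; this is routine but should be stated explicitly. The genuinely substantive input (that in infinite dimensions one equality of orthogonals does not follow from the other, so one must feed in the transitivity-on-level-sets hypothesis) is already packaged inside Proposition \ref{modulog}, so nothing new needs to be proved.
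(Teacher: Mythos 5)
Your main line matches the paper's (implicit) argument exactly: the weak dual pair is the content of the ``Invariant momentum maps'' example, transitivity of $G$ on level sets of $\J_L$ is read, via \eqref{ker_range} and Remark \ref{mut_compl_orth}, as $\g_M=\ker T\J_L=(\h_M)^\om$, and Proposition \ref{modulog} then upgrades the weak dual pair to a dual pair; the commutativity claim for connected groups follows, as you indicate, from $[\xi_M,\eta_M]=X_{\{\langle\J_R,\xi\rangle,\langle\J_L,\eta\rangle\}}=0$ and integration of the commuting flows.

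However, your handling of the parenthetical alternative hypothesis contains a step that fails. From ``$H$ acts transitively on level sets of $\J_R$'', i.e.\ $\h_M=\ker T\J_R=(\g_M)^\om$, you claim that ``taking orthogonals and invoking Lemma \ref{one_is_enough} in the symmetric form recovers $\g_M=(\h_M)^\om$''. This is precisely the finite-dimensional implication the paper rules out: taking orthogonals in $\h_M=(\g_M)^\om$ only yields $\g_M\subset(\g_M)^{\om\om}=(\h_M)^\om$, which you already had, and Lemma \ref{one_is_enough} concerns the kernels $\ker T\J_1$, $\ker T\J_2$ of a weak dual pair, not the orbit distributions $\g_M$, $\h_M$. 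Remark \ref{oneisenough} states explicitly (the crossed arrows) that neither of the equalities $\g_M=(\h_M)^\om$, $\h_M=(\g_M)^\om$ implies the other in infinite dimensions, and Section \ref{5} exhibits a dual pair where the first holds and the second fails, so the transfer you propose cannot be justified. The repair is immediate and avoids the false implication: from $\h_M=(\g_M)^\om$ and \eqref{ker_range} you get directly $(\ker T\J_R)^\om=(\h_M)^\om=\ker T\J_L$, which is one of the equalities in \eqref{strong_dual_pair}, and Lemma \ref{one_is_enough} applied to the weak dual pair then yields the other; in other words, invoke the statement of Proposition \ref{modulog} with the roles of $\g$ and $\h$ (and of $\J_R$ and $\J_L$) exchanged, rather than attempting to convert the hypothesis on one side into the hypothesis on the other.
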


\begin{corollary}\label{finocchio}
Consider the commuting actions of two Lie groups $G$ and $H$ on a manifold $Q$, and their lift to the cotangent bundle $T^*Q$. Then the associated pair of cotangent momentum maps
\begin{equation}\label{dp3}
\h^*\stackrel{\J_L}{\longleftarrow}T^*Q\stackrel{\J_R}{\longrightarrow}\g^*
\end{equation}
is a weak dual pair. If moreover the $G$ action is transitive on level sets of $\J_L$
$($or the $H$ action is transitive on level sets of $\J_R$$)$, then \eqref{dp3} is a dual pair. 
\end{corollary}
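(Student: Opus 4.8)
The plan is to deduce this directly from Corollary \ref{cipolla}, so the only real work is to check that the cotangent-lifted actions satisfy its hypotheses. First I would recall the standard facts about cotangent lifts. If $\Phi:G\times Q\to Q$ and $\Psi:H\times Q\to Q$ denote the two commuting actions (one on the right, one on the left), their cotangent lifts $\hat\Phi$ and $\hat\Psi$ to $M:=T^*Q$ preserve the tautological $1$-form $\theta_Q$, hence preserve the canonical symplectic form $\omega=-\dd\theta_Q$; thus both lifted actions are symplectic on $(M,\omega)$. Moreover they admit the standard $\operatorname{Ad}^*$-equivariant cotangent momentum maps $\J_R:T^*Q\to\g^*$ and $\J_L:T^*Q\to\h^*$, characterized by $\langle\J_R(\alpha_q),\xi\rangle=\langle\alpha_q,\xi_Q(q)\rangle$ and $\langle\J_L(\alpha_q),\eta\rangle=\langle\alpha_q,\eta_Q(q)\rangle$; equivalently $\J_R^\xi=\ii_{\xi_M}\theta_Q$ and $\J_L^\eta=\ii_{\eta_M}\theta_Q$, where $\xi_M,\eta_M$ are the infinitesimal generators of the cotangent-lifted actions and $\xi_Q,\eta_Q$ those of the base actions.

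The second step is to verify that $\J_L$ is $G$-invariant. Since $\Phi$ and $\Psi$ commute on $Q$, functoriality of the cotangent lift shows that $\hat\Phi$ and $\hat\Psi$ commute on $T^*Q$, i.e. $\hat\Phi_g\circ\hat\Psi_h=\hat\Psi_h\circ\hat\Phi_g$ for all $g\in G$, $h\in H$. Differentiating in $h$ at the identity gives $T\hat\Phi_g\circ\eta_M=\eta_M\circ\hat\Phi_g$ for every $\eta\in\h$, hence $T(\hat\Phi_g^{-1})\circ\eta_M\circ\hat\Phi_g=\eta_M$. Combining this with $\hat\Phi_g^*\theta_Q=\theta_Q$ yields
\[
\hat\Phi_g^*\J_L^\eta=\hat\Phi_g^*(\ii_{\eta_M}\theta_Q)=\ii_{T(\hat\Phi_g^{-1})\circ\eta_M\circ\hat\Phi_g}(\hat\Phi_g^*\theta_Q)=\ii_{\eta_M}\theta_Q=\J_L^\eta,
\]
so $\J_L\circ\hat\Phi_g=\J_L$ for all $g\in G$; symmetrically $\J_R$ is $H$-invariant. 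Now Corollary \ref{cipolla} applies verbatim and gives that \eqref{dp3} is a weak dual pair. If in addition the $G$-action is transitive on the level sets of $\J_L$ — equivalently, by Remark \ref{mut_compl_orth} and \eqref{ker_range}, $\g_M=\ker T\J_L=(\h_M)^\om$ — then the hypothesis of Proposition \ref{modulog} is met and \eqref{dp3} is a dual pair. The case where one uses instead $H$-invariance of $\J_R$ and transitivity of the $H$-action on level sets of $\J_R$ is entirely analogous.

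Since all the substance is already contained in Proposition \ref{modulog} and Corollary \ref{cipolla}, there is no deep obstacle here; the only point that deserves care — especially because the corollary is meant to be used with $G$, $H$, and $Q$ infinite dimensional — is to make sure the preliminary constructions are legitimate in the Fr\'echet category: that the cotangent lifts are smooth symplectic actions on $T^*Q$, and that the formulas above really define momentum maps with values in the chosen (regular or distributional) duals $\g^*$ and $\h^*$. Granting that, the $G$-invariance of $\J_L$ is the one genuinely new line, and it is immediate from functoriality of the cotangent lift together with the invariance of the tautological $1$-form.
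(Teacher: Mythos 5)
Your proposal is correct and takes essentially the same route as the paper: the key step in both is that commutativity of the actions forces one cotangent momentum map to be invariant under the other group, after which Corollary \ref{cipolla} (equivalently Proposition \ref{modulog}) yields both the weak dual pair and dual pair statements. The only cosmetic difference is that you check the invariance via $\J_L^\eta=\mathbf{i}_{\eta_M}\theta_Q$ and the invariance of the tautological one-form, whereas the paper reads it off directly from formula \eqref{cot_momap} using the $H$-equivariance of the infinitesimal generator $\xi_Q$.
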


\begin{proof}
If $M=T^*Q$ is a cotangent bundle on which $G$ acts by the cotangent lift of its action on $Q$, then
\begin{equation}\label{cot_momap}
\mathbf{J}:T^*Q\rightarrow\mathfrak{g}^*,\quad \langle\mathbf{J}(\alpha_q),\xi\rangle=\langle\alpha_q,\xi_Q(q)\rangle
\end{equation}
is an equivariant momentum map \cite{MaRa99}. If $\xi_Q$ is an infinitesimal generator of the $G$ action, then $\xi_Q$ is $H$-equivariant, since the actions commute. Thus, the cotangent momentum map $\mathbf{J}_R$, associated to the cotangent lifted action of $G$, is $H$-invariant. Both the weak dual pair and dual pair properties
follow now by Corollary \ref{cipolla}.
\end{proof}


\section{Volume preserving embeddings}

{In this section we present the infinite dimensional manifolds involved in the geometric formulation of free boundary fluids and comment on their Fr\'echet differential structures.}

\paragraph{Manifolds of embeddings.}
Let $S$ be a compact manifold with smooth boundary and let $M$ be a manifold without boundary.
We denote by $C^\infty(S,M)$ the Fr\'echet manifold of all smooth functions 
$\ph:S\to M$. Its tangent space at $\ph$ consists of all smooth vector fields along $\varphi $, i.e., $T_\varphi C^\infty(S,M)=\{ u_ \varphi \in C^\infty(S,TM) : u_ \varphi(s) \in T_{\varphi(s)}M,\;\forall\,s \in S\}$.

The set $\Emb(S,M)$ of embeddings  is an open subset of $C^\infty(S,M)$. The left action of the Lie algebra $\X(M)$ of vector fields on $\Emb(S,M)$  is transitive, 
i.e., each $ v_ \varphi \in T_\varphi \Emb(S,M)$ can be written as $v_ \varphi =v\o \varphi$ for some $v\in\X(M)$.
Note that $v$ is not necessarily parallel to the boundary of $ \varphi (S)$.

\medskip

{For the rest of this section we assume that $ \operatorname{dim}S= \operatorname{dim}M=n$.
Suppose that $S$ and $M$ are endowed with volume forms $ \mu _S $ and $ \mu _M $. The set of volume preserving embeddings 
\[
\Emb_{\vol}(S,M):=\{\varphi \in\Emb(S,M):\varphi ^*\mu_M=\mu_S\}
\]
is a manifold  \cite{GBVi2014} with tangent space
\begin{equation}\label{Tangent_space_vol} 
T_\varphi \Emb_{\vol}(S,M)=\{v_\varphi \in T_\varphi \Emb(S,M):\operatorname{div}_{\mu _M }( v_ \varphi \circ \varphi ^{-1} )=0\},
\end{equation} 
where $v_\ph\o\ph^{-1}$ is a vector field in $\X(\ph(S))$ and $\operatorname{div}_{\mu _M }$ denotes the divergence relative to the volume form $ \mu _M $.
}

\paragraph{Actions of diffeomorphisms groups.}
Consider the left and right actions of the diffeomorphism groups $\Diff(M)$ and $\Diff(S)$ on $\Emb(S,M)$ given by
\[
(\eta ,\varphi )\mapsto \eta \o \varphi \quad\text{and}\quad (\varphi ,\ps)\mapsto \varphi \o\ps, \quad \eta \in \operatorname{Diff}(M), \quad \psi \in \operatorname{Diff}(S).  
\]
These actions restrict to left and right actions of the subgroups of volume preserving diffeomorphisms
$\Diff_{\vol}(M)$ and $\Diff_{\vol}(S)$ on the manifold of volume preserving embeddings $\Emb_{\vol}(S,M)$. Let us denote by
\begin{align*} 
\X_{\vol}(M)&=\{ v \in \mathfrak{X}  (M) : \operatorname{div}_{\mu _M } v=0 \}\\
\X_{\vol,\|}(S)&=\{ w \in \mathfrak{X}  (S) : \operatorname{div}_{\mu _S}w=0\;\text{ and }\;w\| \partial S\}
\end{align*}
the Lie algebras of $\Diff_{\vol}(M)$ and $\Diff_{\vol}(S)$, respectively.
The infinitesimal actions of $v\in\X_{\vol}(M)$ and $w\in\X_{\vol,\|}(S)$ are given by
\[
v_{\Emb_{\vol}(S,M)}(\varphi )=v\o \varphi \quad\text{and}\quad w_{\Emb_{\vol}(S,M)}(\varphi )=T\varphi \o w.
\]
Note that from \eqref{Tangent_space_vol}, each $ v _\varphi \in T_ \varphi \Emb_{\rm vol}(S,M)$ reads $v _\varphi = v \circ \varphi $, where $v$ is a divergence free vector field on $ \varphi (S)$. 

\begin{remark}{\rm
There is an analogous result to the transitivity of the Lie algebra action of 
$\mathfrak{X}(M) $ on $\Emb(S,M)$, see \cite{H76}, in the case of $\Emb_{\vol}(S,M)$, but this requires $H^{n-1}(S)=0$.
In this case, the Lie algebra $\X_{\vol}(M)$ of divergence free vector fields acts transitively on $\Emb_{\vol}(S,M)$. Indeed, the divergence free vector field $v$ on $\ph(S)$ admits a potential, so it can be easily extended to a divergence free vector field on $M$.}
\end{remark}
\color{black}

\paragraph{Non-linear Grassmannians.} {The  non-linear Grassmannian $\Gr^S(M)$ of submanifolds of $M$ of type $S$ is a Fr\'echet manifold, see \cite{GBVi2014}, where the result is proved for any compact manifold with smooth boundary and with $ \operatorname{dim}S\leq \operatorname{dim}M$ (see \cite{KrMi97} for the case  $\dim S<\dim M$, $ \partial S=\varnothing$).
Recall that here $ \operatorname{dim}S= \operatorname{dim}M$ and $ \partial S\neq \varnothing $. In this case}, the tangent space at $D\in\Gr^S(M)$ can be identified with
\[
T_D\Gr^S(M)=T_{\pa D}\Gr^{\pa S}(M)=\Ga(T(\pa D)^\perp),
\]
where $T(\pa D)^\perp=TM|_{\pa D}/T(\pa D)$ denotes
the normal bundle of the codimension one submanifold $\pa D\subset M$.
If $M$ is endowed with a fixed Riemannian metric $g$,
since $\pa D$ is oriented, we can identify the space of sections of the normal bundle $T(\pa D)^\perp$ with $C^\oo(\pa D)$.

\medskip

Using the submersion 
$$\vol:\Gr^S(M)\to\RR,\quad\vol(D)=\int_D\mu_M,$$
we can express the non-linear Grassmannian $\Gr^S_0 (M)$ of type $S$ submanifolds of $M$ 
with the same total volume as $S$, defined in \eqref{defi}, as
\begin{equation*}
\operatorname{Gr}_0^S(M)
=\vol^{-1}\left(\int_S\mu_S\right).
\end{equation*}
By a regular value theorem valid in the Fr\'echet context (see Theorem III.11 in \cite{NW}), extracted from Gloeckner's implicit function theorem (Theorem 2.3 in \cite{Gloeckner}) it can be shown that $\Gr^S_0 (M)$ is a codimension one submanifold of $\Gr^S(M)$.
.

Using again the Riemannian metric $g$ on $M$,
the tangent space is seen to be 
\[
T_D\Gr_0^S(M)=\left\{f\in C^\oo(\pa D):\int_{\pa D} f\ga_{\pa D}=0\right\},
\]
where $\ga_{\pa D}$ denotes the volume form induced by $g$.

\paragraph{Principal bundles.} As shown in \cite{GBVi2014}, the non-linear Grassmannian $\Gr^S(M)$  
is the base space of a principal bundle with structure group $\Diff(S)$:
\begin{equation}\label{prima}
\pi:\ph\in\Emb(S,M)\mapsto D=\ph(S)\in\Gr^S(M).
\end{equation}
The projection $\pi$ restricted to the set of volume preserving embeddings
$$ 
\operatorname{Emb}_{\vol}(S,M):=\{ \ph\in \operatorname{Emb}(S,M)\mid \ph ^\ast \mu_M= \mu_S \}
$$
takes values in $\Gr_0^S(M)$ because 
$\int_{\ph(S)}\mu_M=\int_S\ph^*\mu_M=\int_S\mu_S$ 
for all $\ph\in\Emb_{\vol}(S,M)$.
As shown in \cite{GBVi2014}, we get a principal $\Diff_{\vol}(S)$ bundle over the non-linear Grassmannian 
$\operatorname{Gr}_0^S(M)$ of all type $S$ submanifolds of $M$ of same volume as $S$:
\begin{equation}\label{princ}
\pi:\ph\in\Emb_{\vol}(S,M)\mapsto D=\ph(S)\in\Gr_0^S(M).
\end{equation}

\color{black}

\paragraph{Cotangent bundles.}
The regular cotangent space to $\Emb(S,M)$ at $\varphi $ is the space of one-forms on $M$ along $\varphi $, i.e., 
\[
T^*_\varphi \Emb(S,M)=\Gamma ( \varphi ^\ast T^*M)= \{\al_\varphi \in C^\infty(S,T^\ast M) : \alpha _\varphi (s) \in T^*_{ \varphi (s)}M,\;\forall\,s \in S\}.
\]
The duality pairing between $T _ \varphi \Emb(S,M)$ and $T^*_ \varphi \Emb(S,M)$ is
\[
\left\langle \al_\varphi ,v_\varphi \right\rangle =\int_S(\al_\varphi(s)\!\cdot\!v_\varphi (s) )\mu_S.
\]
As in the case of the tangent bundle, each $ \alpha _ \varphi $ can be written as $ \alpha _ \varphi = \alpha \circ \varphi $ for some $ \alpha \in \Omega ^1 (M)$.

\medskip

In order to describe the cotangent space to $\Emb_{\vol}(S,M)$ at $\varphi $, we shall use the inclusion 
$\iota:\Emb_{\vol}(S,M)\to\Emb(S,M)$.

\paragraph{Notations.} Given an embedding $ \varphi \in \Emb(S,M)$ and a $k$-form $ \omega  _ \varphi $ along $ \varphi $, i.e., $ \omega  _\varphi \in \Gamma ( \varphi ^\ast \Lambda ^k M)$, we define the pull-back $ \varphi ^\ast \omega  _ \varphi \in \Omega ^k (S)$, by 
\begin{equation}\label{Pull_back_phi} 
(\varphi ^*\omega  _\varphi )(s)(u^1_s,\dots,u^k_s):=\omega  _\varphi (s)\left( T_s\varphi (u^1_s),\dots,T_s\varphi (u^k_s)\right),\;\; \forall \;u^1_s,\dots,u^k_s\in T_sS.
\end{equation} 
An example of a form along $\varphi $ is given by the contraction $ \mathbf{i} _ { v _ \varphi } \mu _M$, where $ v_ \varphi \in T_ \varphi \operatorname{Emb}(S,M)$ and $ \mu _M$ is a volume form on $M$. Explicitly, this contraction reads
\[
\mathbf{i} _ { v _ \varphi } \mu _M(s)(v^1_{ \varphi (s)}, \ldots, v^{n-1}_{ \varphi (s)})= \mu _M( \varphi (s))\left(  v _ \varphi (s), v^1_{ \varphi (s)}, \ldots, v^{n-1}_{ \varphi (s)}\right),
\]
for all $v^1_{ \varphi (s)}, \ldots, v^{n-1}_{ \varphi (s)}\in T_{ \varphi (s)}M$.
From \eqref{Pull_back_phi}, its pull-back is the $(n-1)$-form on $S$ given by
\[
\varphi ^\ast (\mathbf{i} _ { v _ \varphi } \mu _M)(s)(u^1_{ s}, \ldots, u^{n-1}_{ s})=  \mu _M( \varphi (s))\left(  v _ \varphi (s), T_s\varphi (u^1_s),\dots,T_s\varphi (u^{n-1}_s)\right),
\]
for all $u^1_s,\dots,u^{n-1}_s\in T_sS$.

\medskip
 
\begin{lemma}\label{lem}
The regular cotangent space at $\varphi \in\Emb_{\vol}(S,M)$ can be identified with the quotient space
\begin{equation}\label{description_Tstar} 
T^*_\varphi \Emb_{\vol}(S,M)=\left.\Ga(\varphi ^*T^*M)\right/\left \{\al_\varphi :\varphi ^*\al_\varphi =\mathbf{d} h\text{ for }h\in C^\oo(S), h|_{\pa S}=0\right \}.
\end{equation} 
\end{lemma}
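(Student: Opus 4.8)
The plan is to identify $T^*_\varphi\Emb_{\vol}(S,M)$ as the dual of the tangent space $T_\varphi\Emb_{\vol}(S,M)$ described in \eqref{Tangent_space_vol}, using the inclusion $\iota:\Emb_{\vol}(S,M)\hookrightarrow\Emb(S,M)$. Since the regular cotangent space to the ambient manifold is $T^*_\varphi\Emb(S,M)=\Ga(\varphi^*T^*M)$, every regular covector on the submanifold is the restriction of such an $\al_\varphi$, so the regular cotangent space is a quotient of $\Ga(\varphi^*T^*M)$ by the \emph{annihilator} of the subspace $T_\varphi\Emb_{\vol}(S,M)\subset T_\varphi\Emb(S,M)$. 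Thus the entire content of the lemma is the computation
\[
\{\al_\varphi\in\Ga(\varphi^*T^*M):\langle\al_\varphi,v_\varphi\rangle=0\ \text{for all}\ v_\varphi\in T_\varphi\Emb_{\vol}(S,M)\}=\{\al_\varphi:\varphi^*\al_\varphi=\dd h,\ h\in C^\oo(S),\ h|_{\pa S}=0\}.
\]
(One should of course also check that the quotient is Hausdorff, i.e.\ that the annihilator is a closed subspace, which is immediate as it is the joint kernel of a family of continuous linear functionals; in the Fr\'echet setting this is what one means by ``regular'' cotangent space.)

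For the inclusion ``$\supseteq$'', suppose $\varphi^*\al_\varphi=\dd h$ with $h|_{\pa S}=0$. For any $v_\varphi\in T_\varphi\Emb_{\vol}(S,M)$ write $v_\varphi=v\circ\varphi$ with $v\in\X_{\vol}(\varphi(S))$; then by the change of variables $\varphi^*\mu_M=\mu_S$ and \eqref{Pull_back_phi} one gets
\[
\langle\al_\varphi,v_\varphi\rangle=\int_S(\varphi^*\al_\varphi)\wedge(\varphi^*\ii_{v_\varphi}\mu_M)/\mu_S\cdot\mu_S
\]
— more cleanly, I would pair directly on $\varphi(S)$: $\langle\al_\varphi,v_\varphi\rangle=\int_{\varphi(S)}\al\cdot v\,\mu_M=\int_{\varphi(S)}\al\wedge\ii_v\mu_M$, and since $\dd(\ii_v\mu_M)=(\div v)\mu_M=0$ on $\varphi(S)$, an integration by parts (Stokes) gives $\int_{\varphi(S)}\dd(\,h\circ\varphi^{-1}\,)\wedge\ii_v\mu_M=\int_{\pa(\varphi(S))}(h\circ\varphi^{-1})\,\ii_v\mu_M=0$ because $h|_{\pa S}=0$. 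Hence $\langle\al_\varphi,v_\varphi\rangle=0$, so such $\al_\varphi$ lies in the annihilator.

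For the reverse inclusion ``$\subseteq$'', take $\al_\varphi$ in the annihilator and let $\beta:=\varphi^*\al_\varphi\in\Omega^1(S)$. The condition $\langle\al_\varphi,v_\varphi\rangle=0$ for all volume-preserving variations translates, via the same pairing rewritten on $S$, into $\int_S\beta\wedge\ii_w\mu_S=0$ for every $w\in\X(S)$ with $\div_{\mu_S}w=0$ that is unconstrained along $\pa S$ (this is the key: the tangent space \eqref{Tangent_space_vol} does \emph{not} require $w\|\pa S$). Equivalently, $\int_S\iota_w(\beta\wedge\mu_S/\ldots)$ — better: $\int_S(\beta(w))\mu_S=0$ for all such $w$. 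I would then invoke the Hodge-type / de Rham characterization: a $1$-form $\beta$ on the compact manifold-with-boundary $S$ that annihilates all divergence-free vector fields (with no boundary condition) under the $L^2$ pairing must be exact, $\beta=\dd h$, and moreover the freedom of $w$ near $\pa S$ forces the boundary term to vanish, which pins down $h|_{\pa S}=\text{const}$; normalizing, $h|_{\pa S}=0$. Concretely this is the statement that the $L^2$-orthogonal complement of $\{\,\ii_w\mu_S : \div w=0\,\}$ inside $\Omega^1(S)$ (no boundary constraint on $w$) is exactly $\{\dd h : h|_{\pa S}=0\}$ — a Hodge decomposition with Dirichlet conditions.

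The main obstacle is this last step: justifying the de Rham/Hodge-type surjectivity claim — that annihilating all divergence-free fields (including those not parallel to $\pa S$) forces exactness with vanishing boundary value — rigorously, and in a way compatible with the smooth (Fr\'echet) rather than Sobolev category. On a manifold with boundary one must be careful about which boundary conditions appear; the fact that $w$ ranges over \emph{all} divergence-free fields, not just tangential ones, is precisely what upgrades ``closed'' to ``exact with Dirichlet data''. I expect the cleanest route is: first show $\beta$ is closed (testing against $w=\grad u$-type or locally supported divergence-free fields gives $\dd\beta=0$), then show $\beta$ has no harmonic part and its ``co-closed'' component is trivial using the boundary-unconstrained test fields, concluding $\beta=\dd h$ with $\int_{\pa S}h\,\ii_w\mu_S=0$ for all $w$, whence $h$ is locally constant on $\pa S$; adjusting $h$ by that constant on each component (possible since $\Emb_{\vol}$ connected components / $S$ connected can be assumed, or handled componentwise) yields $h|_{\pa S}=0$. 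All remaining manipulations — the change-of-variables identity, Stokes' theorem, rewriting the duality pairing through $\varphi$ — are routine.
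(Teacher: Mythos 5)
Your proposal is correct and follows essentially the same route as the paper: you identify $T^*_\varphi\Emb_{\vol}(S,M)$ as the quotient of $\Ga(\varphi^*T^*M)$ by the annihilator of $T_\varphi\Emb_{\vol}(S,M)$ (the paper phrases this as computing $\ker T^*_\varphi\iota$), rewrite the pairing as $\int_S\varphi^*\al_\varphi\wedge\varphi^*\ii_{v_\varphi}\mu_M$ tested against closed $(n-1)$-forms, and reduce everything to a Hodge decomposition with Dirichlet boundary conditions. The step you single out as the main obstacle is exactly the point the paper settles by citing the decomposition $\Om^1(S)=\dd\Om^{0}_n(S)\oplus\Om^1_{\de\text{-cl}}(S)$, so your argument matches the published proof both in structure and in level of rigor.
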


\begin{proof}
It is enough to show that 
\[
\ker (T^*_\varphi \iota)=\{\al_\varphi :\varphi ^*\al_\varphi =\mathbf{d} h\text{ for }h\in C^\oo(S), h|_{\pa S}=0\},
\]
where $T^*_\varphi \iota:T^*_\varphi \Emb(S,M)\to T^*_\varphi \Emb_{\vol}(S,M)$.
Indeed, for all $v_\varphi \in T_\varphi \Emb_{\vol}(S,M)$,
and for all $\al_\varphi \in T^*_\varphi \Emb(S,M)=\Ga(\varphi ^*T^*M)$, we have
\[
\left\langle T^*_\varphi \iota\cdot \al_\varphi , v_\varphi \right\rangle =\left\langle \al_\varphi ,T_\varphi \iota\cdot v_\varphi \right\rangle=\int_S(\al_ \varphi  \!\cdot \!v_\varphi )\mu_S=\int_S(\al_\varphi \!\cdot\! v_\varphi) \varphi ^*\mu_M
=\int_S\varphi ^*\al_\varphi \wedge \varphi ^*\mathbf{i} _{v_\varphi }\mu_M.
\]
So, $\al_\varphi \in\ker(T_\varphi ^*\iota)$ if and only if $\int_S\varphi ^*\al_\varphi \wedge \varphi ^*\mathbf{i} _{v_\varphi }\mu_M=0$ for all $v_\varphi $ such that $\mathbf{d} \left( \varphi ^*\mathbf{i} _{v_\varphi }\mu_M\right) =0$, i.e.,  for all closed $(n-1)$-forms
$\varphi ^*\mathbf{i} _{v_\varphi }\mu_M$ on $S$. From the Hodge decomposition $ \Omega ^k (S)= \mathbf{d} \Omega ^{k-1}_n(S) \oplus \Omega ^k_{ \delta -cl}(S)$ for one-forms, we get that $\al_\varphi \in\ker \left( T_\varphi ^*\iota\right) $ if and only if $\varphi ^*\al_\varphi $ is the differential of 
a smooth function $h$ on $S$ that vanishes on $\pa S$.
\end{proof}

\medskip

{We will denote by $[ \alpha _\varphi ]$ the element in the quotient space \eqref{description_Tstar}}


\section{Dual pairs for free boundary fluids}\label{5}

{In this section, we prove the dual pair properties for the momentum map $ \mathbf{J} _R$ and the quotient map $ \pi _R$ associated to free boundary perfect fluids, given in \eqref{DP_free_boundary}. It is worth noting that to obtain this result, we first prove the dual pair property for the pair of momentum maps $ \mathbf{J} _L$ and $ \mathbf{J} _R$, by using the results of Section \ref{deux}. As we shall show, this pair of momentum maps is not associated to mutually completely orthogonal actions. Then, using again a result from Section \ref{deux}, we deduce the dual pair property for \eqref{DP_free_boundary}.}

\subsection{A pair of momentum maps}

Consider the left and right actions of $\Diff_{\vol}(M)$ and $\Diff_{\vol}(S)$ on $\Emb_{\vol}(S,M)$ and the associated cotangent lifted action on $T^*\Emb_{\vol}(S,M)$. Since the actions commute, it follows from Corollary \ref{finocchio} that the associated momentum maps
\[
\X_{\vol}(M)^*\stackrel{\J_L}{\longleftarrow}T^*\Emb_{\vol}(S,M)\stackrel{\J_R}{\longrightarrow}\X_{\vol,\|}(S)^*
\]
form a weak dual pair.
We now compute these momentum maps by using formula \eqref{cot_momap}.

For the $\Diff_{\vol}(M)$-action, given  $[\al_\varphi ]\in T^*\Emb_{\vol}(S,M)$, we have
\begin{equation}\label{jl}
\langle\J_L([\al_\varphi ]),v\rangle=\left\langle [\al_\varphi ],v\o \varphi \right\rangle =\int_S\al_\varphi \! \cdot \!(v\o \varphi )\mu_S
=\int_{\varphi (S)}(\al_\varphi \circ \varphi ^{-1} \! \cdot \!v) \mu_M,
\end{equation} 
for all $v\in\X_{\vol}(M)$. Therefore, 
$\J_L:T^*\Emb_{\vol}(S,M)\to\X_{\vol}(M)^*$ takes values in the singular dual 
of the Lie algebra of divergence free vector fields on $M$.

Let us identify the regular dual to $ \mathfrak{X}  _{\vol,\|}(S)$ with the quotient space $\Om^1(S)/\mathbf{d} \Omega ^0 (S)$. 
For the $\Diff_{\vol}(S)$-action we compute 
\[
\langle\J_R([\al_\varphi ]),w\rangle=\left\langle [\al_\varphi ],T\varphi \o w\right\rangle =\int_S\al_\varphi \! \cdot \!(T\varphi \o w)\mu_S
=\int_S(\varphi ^*\al_\varphi \! \cdot \!w)\mu_S=\langle[\varphi ^*\al_\varphi ],w\rangle,
\]
for all $w\in\X_{\vol,\|}(S)$. Therefore, the right cotangent momentum map $\J_R$ takes values in the regular dual of $\X_{\vol,\|}(S)$ and reads
\begin{equation}\label{jr}
\J_R:T^*\Emb_{\vol}(S,M)\to\X_{\vol,\|}(S)^*,\quad
\J_R([\al_\varphi ])=[\varphi ^*\al_\varphi ].
\end{equation}


\subsection{Transitivity}

In the case of the dual pair on $T^*\Emb(S,M)$ associated to the EPDiff equation, see \cite{GBVi2012}, 
we had to restrict the cotangent  momentum maps
to an open subset of the cotangent bundle in order to show that the weak dual pair is a dual pair.
We denoted by $T_\varphi ^*\Emb(S,M)^\times $ the open subset  of $T_\varphi ^*\Emb(S,M)=\Ga(\varphi ^*T^*M)$
consisting of those $1$-forms along $\varphi $ which are everywhere non-zero on $S$.
Consequently we consider here the open subset
\[
T^*_\varphi \Emb_{\vol}(S,M)^\x=T_\varphi ^*\Emb(S,M)^\times/\{\al_\varphi :\varphi ^*\al_\varphi =\mathbf{d} h\;\text{ for }\;h\in C^\oo(S),\; h|_{\pa S}=0\}.
\]
The restrictions of the momentum maps to this open subset will be denoted again by $\J_L$ and $\J_R$.

\begin{proposition}\label{fenicul}
The group $\Diff_{\vol}(S)$ of volume preserving diffeomorphisms of $S$ acts transitively on the level sets
of the cotangent momentum map $\J_L:T^*\Emb_{\vol}(S,M)^\x\to\X_{\vol}(M)^*$ given by \eqref{jl}.
\end{proposition}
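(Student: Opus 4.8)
The plan is to fix an element $[\al_\varphi]\in T^*\Emb_{\vol}(S,M)^\x$ and describe its $\J_L$-level set explicitly, then exhibit, for any second element $[\beta_\psi]$ in the same level set, a volume preserving diffeomorphism $\chi\in\Diff_{\vol}(S)$ whose cotangent lift sends one to the other. First I would unwind the definition: by \eqref{jl}, $\J_L([\al_\varphi])$ is the distributional $1$-current on $M$ supported on $\varphi(S)$ given by $v\mapsto \int_{\varphi(S)}(\al_\varphi\circ\varphi^{-1})\cdot v\,\mu_M$. Since the $1$-forms in play are everywhere non-zero, the support of this current is exactly $\varphi(S)$, so two elements with the same $\J_L$-value must have the \emph{same image} $D=\varphi(S)=\psi(S)$ in $M$, i.e.\ they lie in the same $\Diff_{\vol}(S)$-orbit of the bundle projection \eqref{princ}; write $\psi=\varphi\o\chi_0$ for some $\chi_0\in\Diff_{\vol}(S)$. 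After pulling back by $\chi_0$ we may thus reduce to the case $\psi=\varphi$, i.e.\ both cotangent vectors sit over the same base point $\varphi$, and the equality of $\J_L$-values becomes $\al_\varphi\circ\varphi^{-1}\cdot v = \beta_\varphi\circ\varphi^{-1}\cdot v$ as distributions against all $v\in\X_{\vol}(M)$, i.e.\ $(\al_\varphi-\beta_\varphi)\circ\varphi^{-1}$ annihilates every divergence-free vector field on $D=\varphi(S)$. By the standard Hodge-type characterization on the manifold-with-boundary $D$ (pairing $\Omega^1(D)$ against $\X_{\vol}(D)$ via $\al\wedge\ii_v\mu_M$), this forces $\al_\varphi\circ\varphi^{-1}-\beta_\varphi\circ\varphi^{-1}=\mathbf{d} f$ for some $f\in C^\oo(D)$; equivalently $\varphi^*\al_\varphi-\varphi^*\beta_\varphi=\mathbf{d}(f\o\varphi)$ on $S$, with \emph{no} boundary condition on $f\o\varphi$ (the boundary condition is exactly what was quotiented out in Lemma \ref{lem}).

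So after the reduction the problem becomes: given $\al_\varphi$ everywhere non-zero with $\varphi^*\al_\varphi$ and $\varphi^*\al_\varphi+\mathbf{d} h$ (for arbitrary $h\in C^\oo(S)$) both representing elements of the level set, find $\chi\in\Diff_{\vol}(S)$ with $\chi^*(\varphi^*\al_\varphi)\equiv \varphi^*\al_\varphi+\mathbf{d} h$ modulo $\mathbf{d}$ of a function vanishing on $\pa S$. Write $a:=\varphi^*\al_\varphi\in\Omega^1(S)$; this $1$-form is nowhere zero since $\varphi$ is an immersion and $\al_\varphi$ is nowhere zero. The claim reduces to: the group $\Diff_{\vol}(S)$ acts transitively on the set $\{[a+\mathbf{d} h]: h\in C^\oo(S)\}\subset \Omega^1(S)/\{\mathbf{d} k: k|_{\pa S}=0\}$ of nowhere-zero $1$-forms modulo that subspace. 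I would prove this by a Moser-type deformation argument: connect $a$ to $a+\mathbf{d} h$ by the path $a_t=a+t\,\mathbf{d} h$, note all $a_t$ remain nowhere zero after shrinking (or handle zeros by first absorbing part of $\mathbf{d} h$ into the quotient — using the Hodge decomposition to split $\mathbf{d} h$ into a piece with boundary-vanishing potential plus a complement), and seek a time-dependent $w_t\in\X_{\vol,\|}(S)$ generating a flow $\chi_t$ with $\chi_t^*a_t$ constant modulo the quotient. Differentiating, this requires $\mathcal L_{w_t}a_t+\dot a_t=\mathbf{d}(\text{something vanishing on }\pa S)$, i.e.\ $\ii_{w_t}\mathbf{d} a_t+\mathbf{d}(\ii_{w_t}a_t)+\mathbf{d} h=\mathbf{d} k_t$; since $a_t$ is nowhere zero one can solve $\ii_{w_t}a_t=-h+c_t$ pointwise for $w_t$ along the leaves, and then correct $w_t$ by a divergence-free boundary-parallel vector field to kill the remaining $\ii_{w_t}\mathbf{d} a_t$ term while preserving $\Diff_{\vol}$.

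The main obstacle is the last correction step: producing $w_t$ that is simultaneously (i) divergence-free, (ii) parallel to $\pa S$, and (iii) solves the cohomological equation modulo $\mathbf{d}$ of a boundary-vanishing function. Divergence-freeness of the flow is what keeps us inside $\Emb_{\vol}$ and inside $\Diff_{\vol}(S)$; the boundary-parallel condition is what keeps the flow a genuine diffeomorphism of the manifold-with-boundary $S$; and these two constraints together cut the available $w_t$ down to precisely $\X_{\vol,\|}(S)$, so one must check that the obstruction to solving the equation within that space vanishes. I expect this to come down to an integration-by-parts identity on $S$ showing the relevant pairing of $\mathbf{d} h$ against the appropriate cohomology class is zero — which should follow because $[\al_\varphi+\mathbf{d} h]$ was assumed to lie in the \emph{same} $\J_L$-level set, translating into the statement that $\int_S$ of the obstruction against every test vector field vanishes. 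I would organize the write-up as: (1) support argument giving equality of images and reduction to a common base point; (2) the annihilator computation identifying the level set with the $\mathbf{d} C^\oo(S)$-coset; (3) the Moser deformation solving for $w_t\in\X_{\vol,\|}(S)$; (4) integrating the flow and invoking that $\Diff_{\vol}(S)$ is (the relevant connected component is) generated by such flows, so $\chi=\chi_1$ does the job.
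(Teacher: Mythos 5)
The first half of your plan is the paper's argument: the small-support test-form step forcing $\varphi(S)=\varphi'(S)$ (this is exactly where the everywhere-non-zero condition is used) and the reduction to a common base point via $\chi_0=\varphi^{-1}\circ\varphi'$, which one checks lies in $\Diff_{\vol}(S)$. The genuine gap is in what you conclude from equality of the $\J_L$-values over the same base point, and it derails everything after. The test objects for $\J_L$ are $v\in\X_{\vol}(M)$, whose restrictions to $D=\varphi(S)$ are divergence free but \emph{not} parallel to $\pa D$. Pairing an exact form $\dd f$ against such a $v$ gives, by Stokes, the boundary term $\int_{\pa D}f\,\ii_v\mu_M$, which does not vanish unless $f$ is (locally) constant on $\pa D$. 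So the correct conclusion --- the one the paper draws from the Hodge decomposition, exactly as in the proof of Lemma \ref{lem} --- is that the two pulled-back one-forms differ by $\dd h$ with $h|_{\pa S}=0$; that is, the two covectors are already \emph{equal} in the quotient \eqref{description_Tstar}, and the proof ends there with $\psi=\chi_0$. Your claim that the difference is $\dd f$ with \emph{no} boundary condition is the annihilator computation for the boundary-parallel fields $\X_{\vol,\|}$ (that is what is relevant for $\J_R$ and for the identification $\X_{\vol,\|}(S)^*\cong\Om^1(S)/\dd\Om^0(S)$), not for the fields entering $\J_L$.

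Because of this, your Moser steps (3)--(4) are not only unnecessary but cannot work. Any $\chi\in\Diff_{\vol}(S)$ other than the identity moves the base point: since $\varphi$ is injective, $\varphi\circ\chi=\varphi$ forces $\chi=\operatorname{id}$, so no group element can relate two genuinely distinct covectors sitting over the same $\varphi$; if the level set really contained the whole coset $\{[\al_\varphi+\dd h]:h\in C^\oo(S)\}$ over a fixed $\varphi$, transitivity would simply be false. Moreover, the equation you propose to solve, $\chi^*(\varphi^*\al_\varphi)\equiv\varphi^*\al_\varphi+\dd h$ modulo boundary-vanishing exact forms, only matches the $\J_R$-values of the two elements; since $\chi\cdot[\al_\varphi]$ lies over $\varphi\circ\chi$ while the target lies over $\varphi$, equality of $\J_R$-values is not equality in $T^*\Emb_{\vol}(S,M)^\x$, so even a successful solution of the cohomological equation would not prove orbit equivalence. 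The repair is simply to redo your step (2) keeping the boundary term in the integration by parts; with the resulting boundary condition on the potential, the proposition follows immediately after the reduction to a common base point, with no deformation argument.
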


\begin{proof}
Let $[\al_\varphi ],[\al'_{\varphi '}]\in T^*\Emb_{\vol}(S,M)^\x$ (see Lemma \ref{lem}) and assume that they belong to the same level set of the momentum map $\J_L$.
By \eqref{jl} this means that for all $v\in\X_{\vol}(M)$,
\begin{equation}\label{level}
\int_S\al_\varphi \! \cdot \!(v\o \varphi )\mu_S=\int_S\al'_{\varphi '}\! \cdot \!(v\o \varphi ')\mu_S.
\end{equation}
Let $\ga:=\al_\varphi\circ  \varphi ^{-1}\in\Ga(T^*M|_{\varphi (S)})$ and $\ga':=\al'_{\varphi '}\o\varphi '^{-1}\in\Ga(T^*M|_{\varphi '(S)})$.
Using the fact that both $\varphi $ and $\varphi '$ are volume preserving, we deduce from \eqref{level} that
\[
\int_{\varphi (S)}\ga\wedge \mathbf{i} _v\mu_M =\int_{\varphi '(S)}\ga'\wedge \mathbf{i} _v\mu_M 
\]
for all divergence free vector fields $v$ on $M$, hence
$
\int_{\varphi (S)}\ga\wedge\nu=\int_{\varphi '(S)}\ga'\wedge\nu
$
for all closed $(n-1)$-forms $\nu$ on $M$. 

The embeddings $\varphi $ and $\varphi '$ have the same image in $M$. Otherwise, by choosing $\nu$ with appropriately small support
we can achieve that one member of the above equality is zero, while the other member is not zero.
Here we need the fact that $\ga$ \resp $\ga'$ is everywhere non-zero on $\varphi (S)$ \resp $\varphi '(S)$,
which follows from $\al_\varphi ,\al'_{\varphi '}\in T^*\Emb(S,M)^\x$. 
Since $\varphi (S)=\varphi '(S)$, we can define $ \psi := \varphi ^{-1} \circ \varphi '\in \operatorname{Diff}(S)$. We have $ \psi ^\ast \mu _S = (\varphi ') ^\ast ( \varphi ^{-1} ) ^\ast \mu _S =(\varphi ') ^\ast  \mu _M = \mu _S$, because $ \varphi , \varphi '$ are volume preserving, which proves that $\ps$ is volume preserving, i.e.,  $\ps\in\Diff_{\vol}(S)$.

We define the one-form along $\varphi $ given by $\be_\varphi :=\al'_{\varphi '}\o\ps^{-1}$. In particular
 $[\al'_{\varphi '}]=[\be_\varphi \o\ps]=\ps\cdot[\be_\varphi ]$, where the dot denotes the cotangent action of $\Diff_{\vol}(S)$
on $T^*\Emb_{\vol}(S,M)^\x$. Using \eqref{level}, as well as some of the identities above, we compute
\begin{align*}
\int_S\varphi ^*\al_\varphi \wedge \varphi ^*(\mathbf{i} _v\mu_M)&=\int_S\al_\varphi \! \cdot \!(v\o \varphi )\mu_S
=\int_S\al'_{\varphi' }\! \cdot \!(v\o \varphi')\mu_S
=\int_S(\be_\varphi \o\ps)\! \cdot \!(v\o \varphi \o\ps)\mu_S\\
&=\int_S\be_\varphi \! \cdot \!(v\o \varphi )\mu_S
=\int_S\varphi ^*\be_\varphi \wedge \varphi ^*(\mathbf{i} _v\mu_M),
\end{align*}
for all $v\in\X_{\vol}(M)$, hence for all closed $(n-1)$-forms $\varphi ^*(\mathbf{i} _u\mu_M )$ on $S$.
From the Hodge decomposition, we conclude that the one-forms $\varphi ^*\al_\varphi $ and $\varphi ^*\be_\varphi $ on $S$ differ by an exact form $\mathbf{d} h$ with $h|_{\pa S}=0$,
\ie $[\al_\varphi ]=[\be_\varphi ]$. Now, we have $[\al'_{\varphi '}]=\ps\cdot[\be_\varphi ]=\ps\cdot[\al_\varphi ]$ 
and  $\ps\in\Diff_{\vol}(S)$, hence we get the desired result. 
\end{proof}

\medskip

The following result is a consequence of Corollary \ref{finocchio} and Proposition \ref{fenicul}:

\begin{corollary}\label{dupa}
The pair of momentum maps
\[
\X_{\vol}(M)^*\stackrel{\J_L}{\longleftarrow}T^*\Emb_{\vol}(S,M)^\x\stackrel{\J_R}{\longrightarrow}\X_{\vol,\|}(S)^*
\]
is a dual pair.
\end{corollary}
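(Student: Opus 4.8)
The plan is to invoke Corollary~\ref{finocchio} together with Proposition~\ref{fenicul} directly. Corollary~\ref{finocchio} applies to the commuting actions of $G = \Diff_{\vol}(S)$ and $H = \Diff_{\vol}(M)$ on $T^*\Emb_{\vol}(S,M)^\x$ (the commutativity of the left and right composition actions being the standard fact recalled in Section~\ref{5}, which descends to the cotangent lift). It gives immediately that the pair $\J_L,\J_R$ is a weak dual pair, and moreover asserts that the pair is a genuine dual pair as soon as one of the two groups acts transitively on the level sets of the corresponding momentum map. Here $G = \Diff_{\vol}(S)$ plays the role of the group whose momentum map is $\J_R$, and $\J_L$ is the momentum map associated to the $H = \Diff_{\vol}(M)$-action; so the relevant transitivity statement is precisely ``$\Diff_{\vol}(S)$ acts transitively on the level sets of $\J_L$,'' which is exactly the content of Proposition~\ref{fenicul}.

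The proof therefore reduces to assembling these two pieces: first cite Corollary~\ref{finocchio} to get the weak dual pair property on $T^*\Emb_{\vol}(S,M)^\x$ (noting that restriction to the open $\Diff_{\vol}(S)$- and $\Diff_{\vol}(M)$-invariant subset preserves the cotangent momentum maps and the relevant kernel/orthogonal relations), then feed in Proposition~\ref{fenicul} as the transitivity hypothesis in the ``moreover'' clause of that corollary. One small point to verify is that the open subset $T^*\Emb_{\vol}(S,M)^\x$ is invariant under both actions: the $\Diff_{\vol}(S)$-action sends $\al_\varphi$ to $\al_\varphi\circ\psi^{-1}$, which is nowhere zero iff $\al_\varphi$ is, and the $\Diff_{\vol}(M)$-action sends $\al_\varphi$ to $(T\eta^{-1})^*\circ\al_\varphi\circ\eta$ composed appropriately, again preserving the nowhere-vanishing condition; and this condition passes to the quotient defining $T^*_\varphi\Emb_{\vol}(S,M)$ by Lemma~\ref{lem} precisely as set up in Section~\ref{5}. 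Given these, Corollary~\ref{finocchio} delivers the conclusion.

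There is essentially no obstacle here: the corollary was engineered in Section~\ref{deux} specifically to be applied in this situation, and Proposition~\ref{fenicul} supplies exactly the missing transitivity input. The only thing worth a sentence of care is the asymmetry --- it is $\Diff_{\vol}(S)$ that acts transitively on level sets of $\J_L$, not $\Diff_{\vol}(M)$ on level sets of $\J_R$ --- but Corollary~\ref{finocchio} is stated so as to accept either alternative, so this causes no difficulty in the argument. (This asymmetry is, of course, the phenomenon of non-mutually-completely-orthogonal actions advertised in the introduction, and it is what makes the infinite-dimensional setting essential, but it does not affect the present deduction.)

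\begin{proof}
Since the left $\Diff_{\vol}(M)$-action and the right $\Diff_{\vol}(S)$-action on $\Emb_{\vol}(S,M)$ commute, their cotangent lifts to $T^*\Emb_{\vol}(S,M)$ commute as well; these actions preserve the open subset $T^*\Emb_{\vol}(S,M)^\x$ and restrict the cotangent momentum maps to $\J_L$ and $\J_R$ given by \eqref{jl} and \eqref{jr}. By Corollary \ref{finocchio}, the restricted pair $\J_L,\J_R$ is a weak dual pair. By Proposition \ref{fenicul}, $\Diff_{\vol}(S)$ acts transitively on the level sets of $\J_L$. Since $\J_R$ is the momentum map associated to the $\Diff_{\vol}(S)$-action and $\J_L$ the one associated to the $\Diff_{\vol}(M)$-action, this is precisely the transitivity hypothesis appearing in the ``moreover'' part of Corollary \ref{finocchio}. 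Hence the pair is a dual pair.
\end{proof}
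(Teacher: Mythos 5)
Your argument is correct and is exactly the paper's own proof: the corollary is stated there as a direct consequence of Corollary~\ref{finocchio} (weak dual pair plus the transitivity clause) and Proposition~\ref{fenicul} (transitivity of $\Diff_{\vol}(S)$ on level sets of $\J_L$). Your extra check that $T^*\Emb_{\vol}(S,M)^\x$ is invariant under both cotangent-lifted actions is a sensible detail the paper leaves implicit, and you correctly handle the asymmetry in which group supplies the transitivity.
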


We will show below that the commuting actions of $\Diff_{\vol}(S)$ and $\Diff_{\vol}(M)$ are not mutually completely orthogonal (in the sense of \cite{LM}, see Remark \ref{mut_compl_orth}),
namely only the $\Diff_{\vol}(S)$-orbits are the symplectic orthogonals of the $\Diff_{\vol}(M)$-orbits, not vice-versa.
Thus we have an example of a dual pair of momentum maps
$\h^*\stackrel{\J_L}{\longleftarrow}(M,\om)\stackrel{\J_R}{\longrightarrow}\g^*$
that satisfies
$\g_M=(\h_M)^\om$, but not $\h_M=(\g_M)^\om$ (see Remark \ref{oneisenough}).

\begin{proposition}
The action of the Lie algebra of divergence free vector fields $\X_{\vol}(M)$
on level sets of the cotangent momentum map 
$\J_R:T^*\Emb_{\vol}(S,M)\to\X_{\vol,\|}(S)^*$ given by \eqref{jr}
is not transitive $($hence the action of the volume preserving diffeomorphism group $\Diff_{\vol}(M)$ is not transitive$)$.
\end{proposition}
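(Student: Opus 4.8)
\emph{Plan.}
The plan is to exhibit a single point in one level set of $\J_R$ at which the tangent space to the level set strictly contains the tangent space to the $\X_{\vol}(M)$-orbit; since $\J_R$ is defined on all of $T^*\Emb_{\vol}(S,M)$ (not merely on the open subset $T^*\Emb_{\vol}(S,M)^\x$), the natural point to work at is a zero covector $0_\varphi$. First I would pin down the level set $\J_R^{-1}([0])$. By \eqref{jr} it contains the whole zero section, because $\J_R(0_\varphi)=[\varphi^*0]=[0]$. It contains strictly more: given $h\in C^\oo(S)$, pick $\al_\varphi\in\Ga(\varphi^*T^*M)$ with $\varphi^*\al_\varphi=\mathbf{d}h$ — such $\al_\varphi$ exists because $\varphi$ is an embedding and $\dim S=\dim M$, so every $1$-form on $S$ is a pull-back along $\varphi$ — so that $\J_R([\al_\varphi])=[\mathbf{d}h]=[0]$ too. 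By the description \eqref{description_Tstar} of the cotangent fibre, this $[\al_\varphi]$ is a nonzero covector over $\varphi$ as soon as $h|_{\pa S}$ is non-constant: if $[\al_\varphi]=0$ then $\mathbf{d}h=\mathbf{d}h'$ with $h'|_{\pa S}=0$, forcing $h-h'$ (locally) constant and hence $h|_{\pa S}$ constant. Thus $[\al_\varphi]\in\J_R^{-1}([0])$ lies off the zero section.

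For the group statement I would use that the cotangent-lifted $\Diff_{\vol}(M)$-action preserves the zero section (a cotangent lift is fibrewise linear, hence fixes the zero covector), so the $\Diff_{\vol}(M)$-orbit of $0_\varphi$ stays inside the zero section and can never reach the point $[\al_\varphi]$ constructed above; already this shows $\Diff_{\vol}(M)$ is not transitive on $\J_R^{-1}([0])$. For the Lie algebra statement I would compare, inside $T_{0_\varphi}T^*\Emb_{\vol}(S,M)$, the image of $\X_{\vol}(M)$ under the infinitesimal action with $\ker T_{0_\varphi}\J_R$ (recall from Remark \ref{mut_compl_orth} that transitivity of the algebra on a level set means exactly that these two coincide at each point, and note $(\g_M)^\om=\ker T\J_R$ by \eqref{ker_range}). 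The infinitesimal generators of the $\Diff_{\vol}(M)$-action at $0_\varphi$ are tangent to the invariant zero section, hence have vanishing vertical (fibre) component. On the other hand, differentiating $[\al_\varphi]\mapsto[\varphi^*\al_\varphi]$ at $0_\varphi$ gives $T_{0_\varphi}\J_R(\dot\varphi,[\dot\al_\varphi])=[\varphi^*\dot\al_\varphi]$ (the term in $\dot\varphi$ is proportional to the base covector, which here is $0$); thus $\ker T_{0_\varphi}\J_R$ contains every vertical vector $[\dot\al_\varphi]$ with $\varphi^*\dot\al_\varphi\in\mathbf{d}\Om^0(S)$, and by the computation of the previous paragraph such a vertical vector is a nonzero element of the fibre \eqref{description_Tstar} whenever the corresponding function on $S$ has non-constant boundary trace. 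Hence the image of $\X_{\vol}(M)$ at $0_\varphi$ is strictly contained in $\ker T_{0_\varphi}\J_R$, the action is not transitive there, and consequently $\h_M\neq(\g_M)^\om$ as announced before the statement.

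The only genuinely delicate step is the bookkeeping in the quotient \eqref{description_Tstar}: one must be sure the candidate vertical direction is really nonzero as an element of the cotangent fibre, which comes down to the elementary remark that two primitives of the same exact $1$-form on $S$ differ by a (locally) constant function. Everything else is formal — the identification of vertical tangent vectors at the zero section with fibre elements, the invariance of the zero section under cotangent lifts, and the fibrewise linearity of $\J_R$. Conceptually, it is precisely the nontrivial quotient appearing in the cotangent fibre \eqref{description_Tstar}, a consequence of $\pa S\neq\varnothing$, that produces vertical directions in $\ker T\J_R$ inaccessible to the $\Diff_{\vol}(M)$-action; so this failure of transitivity, equivalently the inequality $\h_M\neq(\g_M)^\om$ of Remark \ref{oneisenough}, is a genuine free-boundary phenomenon.
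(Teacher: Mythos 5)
Your argument is correct for the proposition as literally stated, but it takes a genuinely different, and narrower, route than the paper. You work exclusively at a zero covector $0_\varphi$: there $\ker T_{0_\varphi}\J_R$ contains nonzero vertical vectors $\operatorname{Ver}_{0_\varphi}([\dot\al_\varphi])$ with $\varphi^*\dot\al_\varphi=\mathbf{d}h$ and $h|_{\pa S}$ not (locally) constant, which are indeed nonzero in the quotient fibre \eqref{description_Tstar}, while every infinitesimal generator of the cotangent-lifted $\X_{\vol}(M)$-action at a point of the zero section is tangent to the zero section and hence has no vertical component; the group statement follows from invariance of the zero section under cotangent lifts. These steps are sound. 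The paper instead argues at an \emph{arbitrary} point $[\al_\varphi]$: it uses the same vertical directions $\operatorname{Ver}_{[\al_\varphi]}([\be_\varphi])$ with $\varphi^*\be_\varphi=\mathbf{d}h$, notes that such a vector can be an infinitesimal generator $v_{T^*\Emb_{\vol}(S,M)}([\al_\varphi])$ only if $v\in\X_{\vol}(M)$ vanishes on $\varphi(S)$, in which case the generator equals $\operatorname{Ver}_{[\al_\varphi]}([\al_\varphi\circ\nabla v])$, and then gets a contradiction because $\varphi^*(\al_\varphi\circ\nabla v)+\mathbf{d}k$ (with $k|_{\pa S}=0$) vanishes on $T(\pa S)$ whereas $\mathbf{d}h$ need not. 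What the paper's pointwise argument buys is exactly what your closing sentences claim but your zero-covector argument does not deliver: the failure of $\h_M=(\g_M)^\om$ for the dual pair of Corollary \ref{dupa}, which lives on the open subset $T^*\Emb_{\vol}(S,M)^\x$ of everywhere non-vanishing covectors. Since $0_\varphi\notin T^*\Emb_{\vol}(S,M)^\x$, non-transitivity at the zero covector says nothing about mutual complete orthogonality on that subset; to support the assertion preceding the proposition and Remark \ref{oneisenough} one needs the strict inclusion at points of $T^*\Emb_{\vol}(S,M)^\x$, which is precisely what the $\nabla v$/boundary-trace argument provides and your approach would have to be redone (essentially along the paper's lines) to obtain. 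So: your proof is a valid, more elementary proof of the stated proposition, but it is strictly weaker in the role the proposition plays in the paper.
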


\begin{proof}
We will show that  the tangent space at $[\al_\varphi ]$
to the $\Diff_{\vol}(M)$-orbit through $[\al_\varphi ]$, namely $\X_{\vol}(M)_{T^*\Emb_{\vol}(S,M)}([\al_\varphi ])$, is strictly included in the kernel of the tangent map $T_{[\al_\varphi ]}\J_R$.

Given $[\al_\varphi ],[\be_\varphi ]\in T^*_\varphi \Emb_{\vol}(S,M)$, we consider the vertical lift 
$$
\operatorname{Ver} _{[\al_\varphi ]}([\be_\varphi ]):= \frac{d}{dt}\Big|_{t=0}[\al_\varphi +t\be_\varphi ]\in T_{[\al_\varphi ]}(T^*\Emb_{\vol}(S,M)).
$$ 
If $\J_R([\be_\varphi ])=[\varphi ^*\be_\varphi ]=0$, then the vertical lift $\operatorname{Ver} _{[\al_\varphi ]}([\be_\varphi ])$
belongs to the kernel of $T_{[\al_\varphi ]}\J_R$.
This is the case for $ [\beta _\varphi ]$ such that
\begin{equation}\label{beta_h}
\varphi ^*\be_\varphi =\mathbf{d} h,
\end{equation}
for an arbitrary $ h \in C^\infty(S)$.

We assume by contradiction that the vertical lift 
$\operatorname{Ver}_{[\al_\varphi ]}([\be_\varphi ])$ is tangent to the $\Diff_{\vol}(M)$-orbit through $[\al_\varphi ]$,
\ie there exists $v\in\X_{\vol}(M)$ such that
\begin{equation}\label{unu}
\operatorname{Ver} _{[\al_\varphi ]}([\be_\varphi ])=v_{T^*\Emb_{\vol}(S,M)}([\al_\varphi ]).
\end{equation}
The tangent map of the canonical projection $T^*\Emb_{\vol}(S,M)\to\Emb_{\vol}(S,M)$ applied to \eqref{unu}
leads to $0=v_{\Emb_{\vol}(S,M)}(\varphi )$,
so the divergence free vector field $v$ on $M$
must vanish on the image $\varphi (S)$ of the embedding $\varphi $.

There is an induced action of the Lie subalgebra of divergence free 
vector fields on $M$ that vanish on $\varphi (S)$ on the vector space $T^*_\varphi \Emb_{\vol}(S,M)$ by $v\cdot\al_\varphi =\al_\varphi \o\nabla v$.
It can be used to express the infinitesimal generator as
\begin{equation}\label{doi}
v_{T^*\Emb_{\vol}(S,M)}([\al_\varphi ])=\operatorname{Ver} _{[\al_\varphi ]}([\al_\varphi \o\nabla v]).
\end{equation}
The identities \eqref{unu} and \eqref{doi} lead to 
$[\al_\varphi \o\nabla v]=[\be_\varphi ]$, so {there exists $k \in C^\infty(S)$, $k|_{ \partial S}=0$ such that $\varphi ^*(\al_\varphi \o\nabla v)+ \mathbf{d} k=\varphi ^*\be_\varphi \in\Om^1(S)$}.
This is a contradiction because the left hand side vanishes on $T(\pa S)$,
while {the function $h$ in \eqref{beta_h} can be chosen such that} the right hand side doesn't.
\end{proof}

\subsection{Dual pair for perfect free boundary fluid}

In this subsection, we will show that the pair of Poisson mappings \eqref{DP_free_boundary} associated to free boundary perfect fluids, is a dual pair. This pair of momentum map is of the form \eqref{reduction} and we shall use Proposition \ref{modulog} applied to the dual pair obtained in Corollary \ref{dupa} to show that \eqref{DP_free_boundary} is a dual pair. 
We first need the following result.

\begin{theorem}\label{theo1}
The quotient space $T^*\Emb_{\vol}(S,M)^\x/\Diff_{\vol}(S)$ can be endowed with a smooth manifold structure such that the projection 
\[
\pi_R:T^*\Emb_{\vol}(S,M)^\x\to T^*\Emb_{\vol}(S,M)^\x/\Diff_{\vol}(S)
\]
is a smooth map.
\end{theorem}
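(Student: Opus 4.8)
The plan is to realize the quotient as a smooth manifold by exhibiting the projection $\pi_R$ as a principal $\Diff_{\vol}(S)$-bundle, mimicking the construction of the principal bundle $\pi:\Emb_{\vol}(S,M)\to\Gr^S_0(M)$ from \cite{GBVi2014} recalled in \eqref{princ}. The key observation is that the $\Diff_{\vol}(S)$-action on $T^*\Emb_{\vol}(S,M)^\x$ is free (the action on the base $\Emb_{\vol}(S,M)$ already being free, since a volume preserving diffeomorphism $\psi$ with $\varphi\circ\psi=\varphi$ must be the identity), so one only needs to produce local sections. First I would describe the orbit space set-theoretically: by the computation of $\J_R$ in \eqref{jr} and the proof of Proposition \ref{fenicul}, a $\Diff_{\vol}(S)$-orbit in $T^*\Emb_{\vol}(S,M)^\x$ projects to a well-defined pair $(D,\gamma)$ where $D=\varphi(S)\in\Gr^S_0(M)$ and $\gamma$ is the class of the nowhere-zero one-form $\alpha_\varphi\circ\varphi^{-1}$ along $D$ modulo the relevant exact forms. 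Thus the quotient is naturally a bundle over $\Gr^S_0(M)$.

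Next I would build local trivializations. Over a chart of the principal bundle $\pi$ of \eqref{princ}, choose a local section $\sigma:U\subset\Gr^S_0(M)\to\Emb_{\vol}(S,M)$. For $\varphi\in\pi^{-1}(U)$ there is a unique $\psi_\varphi\in\Diff_{\vol}(S)$ with $\varphi=\sigma(\pi(\varphi))\circ\psi_\varphi$, depending smoothly on $\varphi$; this defines the principal bundle structure on the base. Lifting: for $[\alpha_\varphi]\in T^*\Emb_{\vol}(S,M)^\x$ over $\pi(\varphi)\in U$, transport by $\psi_\varphi$ to land in $T^*_{\sigma(\pi(\varphi))}\Emb_{\vol}(S,M)^\x$, namely $[\alpha_\varphi]\mapsto(\pi(\varphi),\psi_\varphi^{-1}\cdot[\alpha_\varphi])=(\pi(\varphi),[\alpha_\varphi\circ\psi_\varphi^{-1}])$. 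This gives a bijection between $\pi_R^{-1}$ of the corresponding open set in the quotient and $U\times\left(T^*_{\text{pt}}\Emb_{\vol}(S,M)^\x/\{\text{trivial }\Diff_{\vol}(S)\text{-stabilizer}\}\right)$; since the stabilizer is trivial, the model fiber is just the vector space $T^*_{\sigma(\pi(\varphi))}\Emb_{\vol}(S,M)^\x$ identified via Lemma \ref{lem} with an open subset of $\Gamma(\sigma(\pi(\varphi))^*T^*M)/\{\varphi^*\alpha_\varphi=\dd h,\ h|_{\pa S}=0\}$. One must check these local models are compatible on overlaps: the transition maps are built from the smooth cocycle $\psi_{\sigma'(\cdot)}$ of the base bundle $\pi$ together with linear (hence smooth) cotangent transport, so compatibility follows. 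This endows the quotient with a smooth Fr\'echet manifold structure making $\pi_R$ a smooth surjective submersion.

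The main obstacle I anticipate is the rigorous verification, in the Fr\'echet category, that the local trivializations glue to a genuine smooth manifold atlas — in particular that the orbit space is Hausdorff and that the charts built above are smoothly compatible. The delicate point is that the ``fiber'' is itself a quotient of a space of sections (the quotient in Lemma \ref{lem}), and one must ensure that the passage to this quotient is compatible with the smooth structure, i.e.\ that $\Gamma(\varphi^*T^*M)\to T^*_\varphi\Emb_{\vol}(S,M)$ admits smooth local sections, which follows from the splitting provided by the Hodge decomposition used in the proof of Lemma \ref{lem} (the Hodge projectors depending smoothly on the metric $\varphi^*g$, hence on $\varphi$). Once this is in place, the freeness of the action together with the existence of smooth local sections of $\pi_R$ — obtained directly from the smooth local sections $\sigma$ of $\pi$ — gives the principal $\Diff_{\vol}(S)$-bundle structure and in particular the smoothness of $\pi_R$, as claimed.
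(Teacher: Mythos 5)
Your proposal is correct in outline, but it takes a genuinely different route from the paper. You build the smooth structure by hand: using the freeness of the $\Diff_{\vol}(S)$-action and local sections $\sigma$ of the principal bundle \eqref{princ}, you transport each $[\alpha_\varphi]$ to the cotangent space at $\sigma(\pi(\varphi))$ and thereby trivialize $\pi_R$ locally, effectively exhibiting it as a principal $\Diff_{\vol}(S)$-bundle over a bundle of cotangent spaces along $\sigma$. The paper instead never chooses local sections: it fixes the $L^2$ Riemannian metric on $\Emb_{\vol}(S,M)$, takes the associated Neumann principal connection $\mathcal{A}$ (the $\X_{\vol,\|}$-component in the Helmholtz decomposition), and uses it to identify $\bigl(T\Emb_{\vol}(S,M)\bigr)/\Diff_{\vol}(S)$ globally with $T\Gr_0^S(M)\oplus\widetilde{\X_{\vol,\|}(S)}$, a smooth vector bundle over $\Gr_0^S(M)$ because \eqref{princ} is a principal bundle; dualizing gives the manifold structure on $\bigl(T^*\Emb_{\vol}(S,M)\bigr)/\Diff_{\vol}(S)$, and the quotient of the invariant open subset $T^*\Emb_{\vol}(S,M)^\x$ is then an open subset of it. What each approach buys: yours yields the stronger statement that $\pi_R$ is a principal bundle and needs no metric or connection, but it leaves you owing the Fr\'echet-category verifications you flag yourself — that the varying fibers $T^*_{\sigma(D)}\Emb_{\vol}(S,M)$ (the quotients of Lemma \ref{lem}) assemble smoothly, that the section-dependent trivializations are compatible on overlaps, and Hausdorffness of the orbit space — whereas the paper's connection-based splitting packages all of this into the already-established bundle structures of \cite{GBVi2014} and, as a bonus, gives an intrinsic global description of the reduced space $T\Gr_0^S(M)\oplus\widetilde{\X_{\vol,\|}(S)}$ that matches the physical phase space of the free boundary fluid. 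Your closing remark on smooth Hodge projectors is the right ingredient to discharge the fiber-bundle issue, so I see no fatal gap, only deferred technical work that the paper's argument avoids.
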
 
\begin{proof} {We will use the smooth principal bundle structure \eqref{princ}
on $\Emb_{\vol}(S,M)$.}

Consider the Riemannian metric on $\Emb_{\vol}(S,M)$ given by
\[
\langle
v_\varphi ,w_\varphi \rangle_\varphi :=\int_Dg(\varphi (x))(v_\varphi (x),w_\varphi (x))\mu_M(x),
\]
where $g$ is a Riemannian metric on $M$ and $ \mu _M$ is the associated volume form. The principal connection associated to this Riemannian metric is called the Neumann connection (\cite{LeMaMoRa1986}) and is given by
\[
\mathcal{A} (\varphi )(v_\varphi )= \varphi ^*\mathbb{P}_{\varphi (S)}(u_\varphi \circ \varphi ^{-1})\in \mathfrak{X}_{\vol,\|}(S),
\]
where $\mathbb{P}_{\varphi (S)}$ is the projector onto the first component of the Helmholtz decomposition
\[
\mathfrak{X}(\varphi (S))=\mathfrak{X}_{\vol,\|}(\varphi (S))\oplus\operatorname{grad}\,\mathcal{F}(\varphi (S)).
\]
Here $\mathfrak{X}(\varphi (S))$ denotes the space of all smooth vector fields on the manifold with boundary $ \varphi (S)$ (not necessarily parallel to the boundary).
Recall that since $v _\varphi  \in T_\varphi  \operatorname{Emb}_{\vol}(S,M)$, the vector field $v_\varphi \circ \varphi ^{-1}$ is divergence free but not necessarily parallel to the boundary.

Using the connection $ \mathcal{A} $, we have an isomorphism
\begin{equation}\label{izo}
\big(  T\Emb_{\vol}(S,M)\big) /\Diff_{\vol}(S) \rightarrow T \operatorname{Gr}_0^{S}(M) \oplus \widetilde{\mathfrak{X}_{\vol ,\|}(S)}
\end{equation}
\[
[v _\varphi ] \mapsto \left( T_\varphi  \pi (v _\varphi  ), [ \varphi , \mathcal{A} (\varphi )(v _\varphi  )]_{\Diff_{\vol}(S)}\right)
\]
covering the identity on $\operatorname{Gr}_0^{S}(M)$.
Here
\[
\widetilde{\mathfrak{X}_{\vol ,\|}(S)}:= \left( \Emb_{\vol}(S,M) \times  \mathfrak{X}_{\vol ,\|}(S)\right) /\Diff_{\vol}(S)
\]
is the adjoint bundle, where the quotient is taken relative to the right action of $ \eta \in \Diff_{\vol}(S)$ on $\Emb_{\vol}(S,M) \times  \mathfrak{X}_{\vol ,\|}(S)\ni ( \varphi ,v)$ given by
\[
(\varphi , v) \mapsto ( \varphi  \circ \eta , \eta ^\ast v).
\]
{The fact that $\pi:\Emb_{\vol}(S,M)\to \Gr_0^S(M)$ is a principal bundle 
implies that $\widetilde{\mathfrak{X}_{\vol ,\|}(S)}$ is a smooth vector bundle
over $\Gr_0^S(M)$.} Thus, by using the isomorphism \eqref{izo}, we get
that the quotient space $\big(  T\Emb_{\vol}(S,M)\big) /\Diff_{\vol}(S) $ is  a smooth vector bundle and hence a manifold. 

The regular dual of the vector bundle \eqref{izo} can be identified with
the quotient space $\big(  T^\ast \Emb_{\vol}(S,M)\big) /\Diff_{\vol}(S) $,
where $T^\ast \Emb_{\vol}(S,M)$ is the regular dual of
the tangent bundle,
so it is a manifold. 
The $\Diff_{\vol}(S)$-action preserves the open subset $T^\ast \Emb_{\vol}(S,M)^\times$ of $T^\ast \Emb_{\vol}(S,M)$.
Thus  $\big(  T^\ast \Emb_{\vol}(S,M)^\times\big) /\Diff_{\vol}(S) $ is 
an open subset of the manifold $\big(  T^\ast \Emb_{\vol}(S,M)^\times\big) /\Diff_{\vol}(S) $,
hence also a manifold.
\end{proof}

\medskip

Now Proposition \ref{modulog} ensures that the Lagrange-to-Euler map $ \pi _R $ 
and the momentum map $ \mathbf{J} _R $ for the free boundary fluid form a dual pair
of Poisson maps.

\begin{theorem}
The pair of Poisson maps
\[
T^*\Emb_{\vol}(S,M)^\x/\Diff_{\vol}(S)\;\stackrel{\pi_R }{\longleftarrow}\;T^*\Emb_{\vol}(S,M)^\x\;\stackrel{\J_R}{\longrightarrow}\;\X_{\vol,\|}(S)^*
\]
is a dual pair.
\end{theorem}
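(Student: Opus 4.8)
The plan is to deduce this from Proposition \ref{modulog}, exactly as the sentence preceding the theorem suggests. We already have all the pieces in place: by Corollary \ref{dupa}, the pair of momentum maps
\[
\X_{\vol}(M)^*\stackrel{\J_L}{\longleftarrow}T^*\Emb_{\vol}(S,M)^\x\stackrel{\J_R}{\longrightarrow}\X_{\vol,\|}(S)^*
\]
is a dual pair; in particular it is a weak dual pair. Moreover, Proposition \ref{fenicul} tells us that $\Diff_{\vol}(S)$ acts transitively on the level sets of $\J_L$, which by Remark \ref{mut_compl_orth} is precisely the statement $\g_M=(\h_M)^\om$ with $\g=\X_{\vol,\|}(S)$ and $\h=\X_{\vol}(M)$ (here $\g_M=\ker T\J_L$ is the tangent to the $\Diff_{\vol}(S)$-orbit). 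Finally, Theorem \ref{theo1} provides the smooth manifold structure on the quotient $M/G = T^*\Emb_{\vol}(S,M)^\x/\Diff_{\vol}(S)$ together with smoothness of the projection $\pi_R$, and the $\Diff_{\vol}(S)$-action on $T^*\Emb_{\vol}(S,M)^\x$ (being the cotangent lift of a symplectic action) is a symplectic action admitting the equivariant momentum map $\J_R$.

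First I would invoke the second part of Proposition \ref{modulog}: taking $\mathfrak g=\X_{\vol,\|}(S)$, $\mathfrak h=\X_{\vol}(M)$, $M=T^*\Emb_{\vol}(S,M)^\x$, $G=\Diff_{\vol}(S)$, the hypothesis ``weak dual pair with $\g_M=(\h_M)^\om$'' is supplied by Corollary \ref{dupa} and Proposition \ref{fenicul} as just explained, and the hypothesis that $M/G$ is a smooth manifold with smooth projection is supplied by Theorem \ref{theo1}. The conclusion of that part of the proposition is exactly that
\[
M/G\stackrel{\pi}{\longleftarrow}(M,\om)\stackrel{\J_R}{\longrightarrow}\g^*
\]
is a dual pair, which is the statement to be proved once we rename $\pi=\pi_R$. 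It remains only to record that $\pi_R$ and $\J_R$ are indeed Poisson maps for the relevant structures: $\J_R$ is Poisson for the $(-)$ Lie--Poisson bracket on $\X_{\vol,\|}(S)^*$ because it is an equivariant momentum map of a symplectic action, and $\pi_R$ is Poisson for the quotient Poisson structure on $T^*\Emb_{\vol}(S,M)^\x/\Diff_{\vol}(S)$ because $\pi_R$ is a surjective submersion onto a quotient by a symplectic group action (this is the standard Poisson reduction picture recalled in the Example following \eqref{strong_dual_pair}).

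There is essentially no obstacle left: the work has been front-loaded into Corollary \ref{dupa}, Proposition \ref{fenicul}, and Theorem \ref{theo1}. The only point that deserves an explicit word is the identification, inside the proof of the second part of Proposition \ref{modulog}, of $\ker T\pi_R$ with $\g_M$, i.e.\ the fact that the fibers of $\pi_R$ are exactly the $\Diff_{\vol}(S)$-orbits and that the orbit tangent spaces coincide with the images of the infinitesimal generators — this is immediate from the construction in Theorem \ref{theo1}, where the quotient is literally the orbit space of the (free) $\Diff_{\vol}(S)$-action and $\pi_R$ is the orbit projection. Once this is in hand, the chain of equalities $(\ker T\pi_R)^\om=(\g_M)^\om=\ker T\J_R$ and $(\ker T\J_R)^\om=(\ker T\J_L)^\om=\g_M=\ker T\pi_R$, using $\g_M=(\h_M)^\om=\ker T\J_L$ from Proposition \ref{fenicul} and the already-established equality $\ker T\J_L=(\ker T\J_R)^\om$ from Corollary \ref{dupa} (via Lemma \ref{one_is_enough}), gives both symplectic-orthogonality conditions \eqref{strong_dual_pair} and completes the proof.

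\begin{proof}
The $\Diff_{\vol}(S)$-action on $T^*\Emb_{\vol}(S,M)^\x$ is the cotangent lift of its action on $\Emb_{\vol}(S,M)$, hence a symplectic action, and by \eqref{cot_momap}--\eqref{jr} it admits the equivariant momentum map $\J_R$. By Corollary \ref{dupa}, the pair $\X_{\vol}(M)^*\stackrel{\J_L}{\longleftarrow}T^*\Emb_{\vol}(S,M)^\x\stackrel{\J_R}{\longrightarrow}\X_{\vol,\|}(S)^*$ is a dual pair, in particular a weak dual pair. By Proposition \ref{fenicul}, $\Diff_{\vol}(S)$ acts transitively on the level sets of $\J_L$; in the notation of Remark \ref{mut_compl_orth} with $\g=\X_{\vol,\|}(S)$ and $\h=\X_{\vol}(M)$ this reads $\g_M=\ker T\J_L=(\h_M)^\om$. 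Finally, Theorem \ref{theo1} endows the quotient $M/G:=T^*\Emb_{\vol}(S,M)^\x/\Diff_{\vol}(S)$ with a smooth manifold structure for which the projection $\pi_R$ is smooth. All hypotheses of the second part of Proposition \ref{modulog} are therefore satisfied, and it yields that
\[
T^*\Emb_{\vol}(S,M)^\x/\Diff_{\vol}(S)\;\stackrel{\pi_R }{\longleftarrow}\;T^*\Emb_{\vol}(S,M)^\x\;\stackrel{\J_R}{\longrightarrow}\;\X_{\vol,\|}(S)^*
\]
is a dual pair. Here $\J_R$ is a Poisson map for the $(-)$ Lie--Poisson bracket on $\X_{\vol,\|}(S)^*$ since it is an equivariant momentum map, and $\pi_R$ is a Poisson map for the quotient Poisson structure on $T^*\Emb_{\vol}(S,M)^\x/\Diff_{\vol}(S)$ since it is the orbit projection for a symplectic group action, as in the Example following \eqref{reduction}.
\end{proof}
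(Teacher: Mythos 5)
Your proposal is correct and follows essentially the same route as the paper: the paper's proof of this theorem is exactly the application of the second part of Proposition \ref{modulog} to the dual pair of Corollary \ref{dupa}, with the hypothesis $\g_M=(\h_M)^\om$ supplied by the transitivity result of Proposition \ref{fenicul} and the smooth quotient structure supplied by Theorem \ref{theo1}. Your additional remarks on the Poisson property of $\pi_R$ and $\J_R$ and on the identification $\ker T\pi_R=\g_M$ simply make explicit what the paper leaves implicit.
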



{\footnotesize

\bibliographystyle{new}
\addcontentsline{toc}{section}{References}

\bigskip

FRAN{C}OIS GAY-BALMAZ, 
{LMD, Ecole Normale Sup\'erieure/CNRS, Paris, France.\\
\texttt{gaybalma@lmd.ens.fr}
\\

CORNELIA VIZMAN, {Department of Mathematics,
West University of Timi\c soara, 
Romania.\\
\texttt{vizman@math.uvt.ro}
}
\end{document}